\newtheorem{theorem}{Theorem}[section]
\newtheorem{lemma}{Lemma}[section]
\newtheorem{corollary}{Corollary}[section]
\newtheorem{remark}{\textbf{Remark}}[section]
\def\ss{\mathbb{S}}
\def\CC{\mathbb{C}}
\def\ve{\varepsilon}
\def\lan{\langle}
\def\ran{\rangle}
\def\n{\nabla}
\def\vp{\varphi}
\def\R{{\mathbb R}}
\newtheorem{proposition}[theorem]{Proposition}
\numberwithin{equation} {section}
\begin{document}
\title[Hopf theorems for curvature flows]{Hopf type theorems for self-similar solutions of curvature flows in $\R^3$}

\begin{abstract}
In this paper we prove rigidity results for two-dimensional, closed, immersed, non-necessarily convex, self-similar solutions of a wide class of fully non-linear parabolic flows in $\R^3.$ We show this self-similar solutions are the round spheres centered at the origin provided it has genus zero and satisfies a suitable upper pinching estimate for the Gaussian curvature. As applications, we obtain rigidity results for the round sphere as the only closed, immersed, genus zero, self-similar solution of several well known flows, as the flow of the powers of mean curvature, the harmonic mean curvature flow and the $\alpha$-Gaussian curvature flow for $\alpha\in(0,1/4).$ We remark that our result does not assume any embeddedness condition.
\end{abstract}

\medskip



\author{Hil\'ario Alencar \and Greg\'orio Silva Neto \and Detang Zhou}

\thanks{Hil\'ario Alencar, Greg\'orio Silva Neto and Detang Zhou were partially supported by the National Council for Scientific and Technological Development - CNPq of Brazil.}

\date{September 21, 2020}

\address{Instituto de Matem\' atica, Universidade Federal de Alagoas, Macei\'o, AL, 57072-900, Brazil}

\email{hilario@mat.ufal.br}

\address{Instituto de Matem\'atica, Universidade Federal de Alagoas, Macei\'o, AL, 57072-900, Brazil}

\email{gregorio@im.ufal.br}

\address{Instituto de Matem\'atica e Estat\'istica, Universidade Federal Fluminense, Niter\'oi, RJ 24020, Brazil}

\email{uffzhou@gmail.com}

\subjclass[2010]{Primary 53C42; Secondary 53E10; 53E44; 35K10; 35K15}

\keywords{curvature flows, self-similar, solutions, self-shrinkers, fully nonlinear, harmonic mean curvature, Gaussian curvature, mean curvature, surface}

\maketitle
\section{Introduction}
The evolution of an embedded hypersurface  in Euclidean space by a specified function of the eigenvalues of its second fundamental form has attracted mathematicians in many areas. In particular, there are interesting results for flows by homogeneous symmetric functions of the principal curvatures, such as mean curvature,  harmonic mean curvature, inverse mean curvature and Gauss curvature flows  by  many authors in the last three decades, see \cite{Gerhardt-1}, \cite{Urbas-0}, \cite{Urbas-01}, \cite{Andrews-FN-0}, \cite{Andrews-FN-01}, \cite{Andrews-FN-1}, \cite{Andrews-FN-2}, \cite{McCoy} and references therein.

All these flows are particular cases of the more general fully non-linear flow
\begin{equation}\label{F-1}
\begin{cases}
\dfrac{\partial X}{\partial t} = W(k_1,k_2)N,\\
X(\cdot,0)=X_0,\\
\end{cases}
\end{equation}
where $k_1$ and $k_2$ are the principal curvatures of the immersions $X$, $N$ is their unitary normal vector fields, and $W\in C^1(\R^2)$. It is known this flow will be parabolic if and only if
\begin{equation}\label{pl}
\frac{\partial W}{\partial k_1}\cdot \frac{\partial W}{\partial k_2} >0.
\end{equation}

We will use a  particular combination of $k_1$ and $k_2$  which will become apparent later. A flow is said to be contracting if $W\ge 0,$ and expanding if $W\le 0$. A stationary solution of a classical curvature flow is a W-surface (Weingarten surface), satisfying the equation $W=0$. While nonround Weingarten spheres exist (for example, Hopf spheres, see \cite{Hopf}), many results state that the only Weingarten spheres satisfying some particular relation are round (cf Chern \cite{chern}). We would like to mention that  Rosenberg and Sa Earp \cite{RS} obtained examples of special W-surfaces near any Wente torus or any Kapouleas examples of surfaces with constant mean curvature.

In this paper, we will address to immersions $X:\Sigma\to\R^3$ which are solutions of the second order partial differential equation
\begin{equation}\label{soliton}
W(k_1,k_2)=-\lambda\lan X,N\ran, \ \lambda\in\R,
\end{equation}
where $k_1$ and $k_2$ are the principal curvatures of the immersion, $N$ is its unitary normal vector field, and $W\in C^1(\R^2)$. 

If $W$ is a homogeneous function of degree $\beta>0,$ i.e., $W(ak_1,ak_2)=a^\beta W(k_1,k_2),\ a>0,$ then the solutions of (\ref{soliton}) are the self-similar solutions of the flow (\ref{F-1}). Equation (\ref{soliton}) generalizes the mean curvature self-shrinker equation and will also describe the self-similar solutions of a large class of curvature flows, as the Gaussian curvature flow, the harmonic mean curvature flow, among others.

Changing the variables $x_1=k_1+k_2$ and $x_2=(k_1-k_2)^2$ we can write
\begin{equation}\label{W-Psi0}
W(k_1,k_2)=\Psi(x_1,x_2)=\Psi(k_1+k_2,(k_1-k_2)^2)=\Psi(H,H^2-4K),
\end{equation}
where $K=k_1k_2$ is the Gaussian curvature of the immersion $X$ and $H=k_1+k_2$ is its mean curvature. Therefore, the equation (\ref{soliton}) becomes
\begin{equation*}
\Psi(H,H^2-4K)=-\lambda\lan X,N\ran, \ \lambda\in\R.
\end{equation*}

The main result of this paper is the following

\begin{theorem}\label{thm-soliton-0}
Let $X:\Sigma\to\R^3$ be a closed, immersed surface of genus zero satisfying
\begin{equation}\label{soliton-0}
\Psi(H,H^2-4K)=-\lambda\lan X,N\ran,\ \lambda\in\R,
\end{equation}
where $\Psi: \mathbb{R}\times [0,+\infty)\to \mathbb{R}$ is a $C^1$ function satisfying $\frac{\partial\Psi}{\partial x_1}\neq 0$. If  there exists a real number $\ve>0$ such that
\begin{equation}\label{hyp-soliton-0}
K \leq \frac{1}{4}\left[1-\varepsilon\lambda^2\left(\|X\|^2-\lan X,N\ran^2\right)\right]H^2,
\end{equation}
then $X(\Sigma)$ is a round sphere centered at the origin and radius satisfying the equation 
\[
\lambda R = \Psi\left(\frac{2}{R},0\right).
\]
\end{theorem}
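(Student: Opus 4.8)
The plan is to run a Hopf-type argument built on the quadratic differential attached to the trace-free part of the second fundamental form, and to turn the genus-zero hypothesis into a contradiction through an index count. First I would fix a local isothermal parameter $z=u+iv$, write the induced metric as $\rho^2|dz|^2$, and form the Hopf differential $\Phi\,dz^2$, where $\Phi=\frac12(e-g)-if$ in terms of the coefficients $e,f,g$ of the second fundamental form. A direct computation gives $|\Phi|^2=\frac{\rho^4}{4}(H^2-4K)$, so the zeros of $\Phi$ are precisely the umbilic points of $X$, and $X(\Sigma)$ is a round sphere exactly when $\Phi\equiv 0$. The whole problem thus reduces to proving $\Phi\equiv 0$.

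The heart of the argument is a Cauchy--Riemann inequality $\left|\partial\Phi/\partial\bar z\right|\le \tilde f\,|\Phi|$ for some continuous $\tilde f$. The Codazzi equations in the parameter $z$ read $\partial\Phi/\partial\bar z=\frac{\rho^2}{2}\,\partial H/\partial z$, so it suffices to bound $|\partial H/\partial z|$ by a continuous multiple of $|\Phi|$. Differentiating the soliton relation \eqref{soliton-0} and using $\partial\Psi/\partial x_1\neq 0$ to solve for $\partial H/\partial z$ gives
\begin{equation*}
\frac{\partial\Psi}{\partial x_1}\,\frac{\partial H}{\partial z}
=-\lambda\,\frac{\partial}{\partial z}\lan X,N\ran
-\frac{\partial\Psi}{\partial x_2}\,\frac{\partial}{\partial z}\left(H^2-4K\right).
\end{equation*}
For the first term I would use $\nabla\lan X,N\ran=-S(X^{\top})$, where $S$ is the shape operator and $X^{\top}$ the tangential part of the position vector, whence $\left|\partial\lan X,N\ran/\partial z\right|\le \frac12\max(|k_1|,|k_2|)\,\|X^{\top}\|$ with $\|X^{\top}\|^2=\|X\|^2-\lan X,N\ran^2$. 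For the second term, substituting $H^2-4K=4|\Phi|^2/\rho^4$ exhibits $\partial(H^2-4K)/\partial z$ as a combination of $\partial|\Phi|^2/\partial z$ and derivatives of $\rho$, so every summand carries a factor $|\Phi|$; this contribution is therefore bounded by $g\,|\Phi|\,(1+|\partial\Phi/\partial z|)$ for a continuous $g$.

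The pinching hypothesis \eqref{hyp-soliton-0} is exactly what tames the remaining, genuinely inhomogeneous, first term. Rewriting it as $H^2-4K\ge \varepsilon\lambda^2\left(\|X\|^2-\lan X,N\ran^2\right)H^2=\varepsilon\lambda^2\|X^{\top}\|^2H^2$ yields $|\lambda|\,\|X^{\top}\|\,|H|\le \varepsilon^{-1/2}\sqrt{H^2-4K}=2\varepsilon^{-1/2}|\Phi|/\rho^2$; combined with $\max(|k_1|,|k_2|)\le|H|$ (a consequence of $4K\le H^2$) this bounds $|\lambda\,\partial\lan X,N\ran/\partial z|$ by a constant multiple of $|\Phi|/\rho^2$. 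Since $\Phi$ is $C^1$ with $|\Phi|$ bounded on the compact $\Sigma$, the coefficient $f+g(1+|\partial\Phi/\partial z|)$ is a bounded continuous function, and I obtain $\left|\partial\Phi/\partial\bar z\right|\le \tilde f\,|\Phi|$. I would then invoke the Hopf/Eschenburg--Tribuzy lemma: a nonzero $\Phi$ obeying such an inequality has only isolated zeros, each of negative index, so the total index of the associated principal line field is nonpositive; as $\Sigma$ has genus zero this total index must equal $\chi(\mathbb{S}^2)=2>0$, a contradiction. Hence $\Phi\equiv 0$, $X$ is totally umbilic, and the closed connected surface $X(\Sigma)$ is a round sphere of some radius $R$ and center $p$. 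Substituting $H=2/R$, $K=1/R^2$ (so $H^2-4K=0$) into \eqref{soliton-0} forces $\lan X,N\ran$ to be constant; writing $X=p+RN$ this holds only when $p=0$, giving $\lan X,N\ran=-R$ for the inward normal and therefore $\lambda R=\Psi(2/R,0)$.

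I expect the main obstacle to be the derivation of the Cauchy--Riemann inequality itself. One must verify that the position-dependent forcing $\lambda\,\partial\lan X,N\ran/\partial z$ is absorbed into a multiple of $|\Phi|$ precisely because of the pinching \eqref{hyp-soliton-0} (without it, this term is $O(1)$ near umbilic points, where $|\Phi|\to 0$, and the inequality fails), and that the $\partial\Psi/\partial x_2$ contribution enters only through the product $|\Phi|\,|\partial\Phi/\partial z|$. This last point is delicate: although the derivative $\partial\Phi/\partial z$ is not controlled on its own, its appearance multiplied by $|\Phi|$ is what guarantees that $\tilde f$ remains a genuine continuous function near the isolated zeros of $\Phi$, which is all that the index argument requires.
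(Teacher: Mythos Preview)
Your strategy is exactly the paper's: compute $\partial_{\bar z}$ of the Hopf differential via Codazzi, differentiate the soliton equation to express $H_z$, absorb the $\partial\Psi/\partial x_2$ contribution (which factors through $|\Phi|$) and use the pinching hypothesis to absorb the position-vector term, then run the Carleman/Eschenburg--Tribuzy index argument on a genus-zero surface. The identification of the center and radius at the end also matches.

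There is, however, one genuine slip. You bound $|\partial_z\lan X,N\ran|$ by a constant times $\max(|k_1|,|k_2|)\,\|X^\top\|$ and then assert $\max(|k_1|,|k_2|)\le |H|$ ``as a consequence of $4K\le H^2$''. This last inequality is false in general: $4K\le H^2$ is just $(k_1-k_2)^2\ge 0$, always true, while $\max(|k_1|,|k_2|)\le |H|=|k_1+k_2|$ fails whenever the principal curvatures have opposite signs (take $k_1=1$, $k_2=-0.9$). Since the theorem does not assume convexity, you cannot exclude this. The easy repair is to use
\[
\max(|k_1|,|k_2|)\le \tfrac{1}{2}\big(|H|+\sqrt{H^2-4K}\,\big),
\]
so that the $\sqrt{H^2-4K}$ part contributes another harmless $|\Phi|$ term and only the $|H|$ part needs the pinching. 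Alternatively (and this is what the paper does) you can compute $\partial_z\lan X,N\ran$ directly from the complex Weingarten relations $N_z=-\tfrac{1}{2}HX_z-\tfrac{2}{\rho}PX_{\bar z}$: this produces a clean split into an $H\lan X,X_z\ran$ piece, handled by \eqref{hyp-soliton-0}, and a $P\lan X,X_{\bar z}\ran$ piece, which already carries a factor $|P|$. With either fix your argument goes through.
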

A number of remarks are listed here.
\begin{remark}
{\normalfont 
The hypothesis (\ref{hyp-soliton-0}) of Theorem \ref{thm-soliton-0} is necessary. In fact, in the Section \ref{example}, we prove that, if there exists non-spherical genus zero rotational surface which is the solution of (\ref{soliton-0}), then (\ref{hyp-soliton-0}) does not hold.
}
\end{remark}

\begin{remark}
{\normalfont
Notice that the equation $\lambda R = \Psi\left(\frac{2}{R},0\right)$ may have no solution for some $\Psi$ and $\lambda.$ In this case of non existence of solutions, Theorem \ref{thm-soliton-0} is considered as a non-existence result.
}
\end{remark}

\begin{remark}
{\normalfont  Recall that a W-surface is called special if the relation between the principal curvatures can be written in the form $\Psi(H,H^2-4K)=0$ with $\frac{\partial \Psi}{\partial x_1}\neq 0$ at every umbilic point. The Chern-Hartman-Winter's theorem (see \cite{H-W} and \cite{chern}) says that that if a closed orientable surface $\Sigma$ of genus zero is a special W-surface of class $C^2$, then $\Sigma$ is a sphere. This improves a theorem of H. Hopf, by removing the analyticity assumptions. Theorem \ref{thm-soliton-0} recovers the Chern-Hartman-Winter's theorem for special Weingarten surfaces by taking $\lambda=0$.
}
\end{remark}

The class of surfaces satisfying (\ref{hyp-soliton-0}) contains all the closed surfaces under the pinching condition
\[
K\leq \frac{1}{4}(1-\delta)H^2, \ \forall \delta>0.
\]
In fact, for given $\delta>0,$ choose $\ve\leq \frac{\delta}{\lambda^2\sup_{\Sigma}(\|X\|^2-\lan X,N\ran^2)}.$ Notice that the supremum exists since $\Sigma$ is assumed to be closed. Since the sphere is not contained in this class we obtain the following non existence result.
\begin{corollary}
There is no closed, immersed solution $X:\Sigma\rightarrow\R^3$ of (\ref{soliton-0}) with genus zero, such that $\frac{\partial\Psi}{\partial x_1}\neq 0,$ and 
\begin{equation}\label{gauss-delta}
K\leq \frac{1}{4}(1-\delta)H^2,
\end{equation}
for every $\delta>0.$
\end{corollary}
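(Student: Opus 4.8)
The plan is to argue by contradiction and reduce everything to Theorem~\ref{thm-soliton-0}, which already absorbs all the analytic work. Suppose there were a closed, immersed, genus zero solution $X:\Sigma\to\R^3$ of (\ref{soliton-0}) with $\frac{\partial\Psi}{\partial x_1}\neq 0$ satisfying the pinching (\ref{gauss-delta}) for a fixed $\delta>0$. The first step I would take is to verify that such a surface automatically satisfies the hypothesis (\ref{hyp-soliton-0}) of the main theorem, which is exactly the computation carried out in the paragraph preceding the statement: since $\Sigma$ is closed, the quantity $\sup_\Sigma(\|X\|^2-\lan X,N\ran^2)$ is finite, so taking $\ve\leq \delta/[\lambda^2\sup_\Sigma(\|X\|^2-\lan X,N\ran^2)]$ gives $\ve\lambda^2(\|X\|^2-\lan X,N\ran^2)\leq\delta$ and hence $\frac{1}{4}[1-\ve\lambda^2(\|X\|^2-\lan X,N\ran^2)]H^2\geq\frac{1}{4}(1-\delta)H^2\geq K$.

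Once (\ref{hyp-soliton-0}) is in force, the second step is simply to invoke Theorem~\ref{thm-soliton-0}, which forces $X(\Sigma)$ to be a round sphere of some radius $R>0$ centered at the origin. The final step is the elementary curvature computation on that sphere that produces the contradiction: there $k_1=k_2=1/R$, so $H=2/R\neq 0$ and $K=1/R^2=\frac{1}{4}H^2$. Feeding this into (\ref{gauss-delta}) gives $\frac{1}{4}H^2=K\leq\frac{1}{4}(1-\delta)H^2$, and dividing by $\frac{1}{4}H^2>0$ yields $1\leq 1-\delta$, contradicting $\delta>0$. Hence no such surface exists.

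Because all the difficulty has been packaged into Theorem~\ref{thm-soliton-0}, I do not expect a substantial obstacle here; the corollary is an immediate consequence, and the only places deserving minor care are the degenerate configurations in the reduction step. If $\lambda=0$ the formula for $\ve$ is vacuous, but then (\ref{hyp-soliton-0}) reduces to $K\leq\frac{1}{4}H^2$, which always holds since $H^2-4K=(k_1-k_2)^2\geq 0$, so the theorem still applies. Similarly, if $\sup_\Sigma(\|X\|^2-\lan X,N\ran^2)=0$ then $\|X\|^2=\lan X,N\ran^2$ pointwise, forcing the position vector to be normal and $\Sigma$ to already be a sphere centered at the origin, after which the same curvature computation delivers the contradiction. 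In every case the crux is that the strict pinching forces $K\leq\frac{1}{4}(1-\delta)H^2<\frac{1}{4}H^2$, which is incompatible with the sphere's umbilic identity $K=\frac{1}{4}H^2$.
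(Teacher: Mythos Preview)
Your proof is correct and follows exactly the paper's intended argument: verify that the strict pinching (\ref{gauss-delta}) implies the hypothesis (\ref{hyp-soliton-0}) by choosing $\ve$ as in the paragraph preceding the corollary, invoke Theorem~\ref{thm-soliton-0} to force $X(\Sigma)$ to be a round sphere, and then observe that the sphere satisfies $K=\tfrac{1}{4}H^2$, violating (\ref{gauss-delta}). Your extra care with the degenerate cases $\lambda=0$ and $\sup_\Sigma(\|X\|^2-\lan X,N\ran^2)=0$ only adds rigor beyond what the paper makes explicit.
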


\begin{remark}
{\normalfont 
We can compare Theorem \ref{thm-soliton-0} with the results of obtained by McCoy, see \cite{McCoy}, Theorem 1.3, p. 320, and Guilfoyle and Klingenberg, see \cite{G-K}, Theorem 4.7, p. 353. The result of \cite{McCoy} holds for higher dimensions and for non-convex surfaces and  requires  more restrictions than Theorem \ref{thm-soliton-0}.  The result of \cite{G-K} requires that the surface is convex, among other conditions over the derivatives of $\Psi.$
}
\end{remark}

\begin{remark}
{\normalfont
The function $W(k_1,k_2)$ is homogeneous of degree $\beta\in\R,$ if and only if the function $\Psi$ satisfies 
\begin{equation}\label{psi-homo}
\Psi(a x_1,a^2 x_2)=a^\beta\Psi(x_1,x_2),\ a>0.
\end{equation} 
By an abuse of notation, we will call $\Psi$ a homogeneous function of degree $\beta\in\R$ if $\Psi$ satisfies (\ref{psi-homo}).

If $\Psi$ is homogeneous of degree $\beta\neq -1$ with $\Psi(1,0)>0,$ and $\lambda>0,$ then the radius of the sphere of Theorem \ref{thm-soliton-0} is given by
\[
R=\left[\lambda^{-1}2^\beta\Psi(1,0)\right]^{\frac{1}{\beta+1}}.
\]
}
\end{remark}

\begin{remark}
{\normalfont 
In \cite{G-L-M}, Theorem 1.12, p .3, Gao, Li and Ma proved that a strictly convex surface satisfying the equation $F(k_1,\ldots,k_n)+C=-\lambda\lan X,N\ran$ is a sphere, where $k_1,\ldots,k_n$ are the principal curvatures of surface, provided $C\leq 0$ and $F$ is a degree $\beta$ homogeneous function which satisfies $F>0,\ \frac{\partial F}{\partial k_i}>0$ and 
\[
\begin{aligned}
& \left(\frac{\partial F}{\partial k_i}k_j - \frac{\partial F}{\partial k_j}k_i\right)(k_i-k_j)^{-1}>0, \\
&\sum_{i=1}^n\frac{1}{k_i}\frac{\partial\log F}{\partial k_i}y_i^2 + \sum_{i,j=1}^n\frac{\partial^2\log F}{\partial k_i \partial k_j} y_iy_j>0,
\end{aligned}
\] 
for every, $i,j=1,\ldots,n$ and $(y_1,\ldots,y_n)\in\R^n.$

We observe that Theorem \ref{thm-soliton-0} holds in the same situation in dimension $2,$ i.e., for $W(k_1,k_2)=F(k_1,k_2)+C$ where $F$ is a degree $\beta$ homogeneous function and $C\in\R,$  only imposing that 
\[
\frac{\partial F}{\partial k_1}+\frac{\partial F}{\partial k_2}>0.
\] 
In this case, the radius $R$ of the sphere\ satisfies 
\[
\lambda R^{\beta+1}-CR^\beta-F(1,1)=0.
\]
}
\end{remark}
	
\begin{remark}
{\normalfont
The flow (\ref{F-1}) is a (weakly) parabolic equation if and only if
\[
\frac{\partial W}{\partial k_1}\cdot\frac{\partial W}{\partial k_2}>0\ (\geq 0),
\]
or equivalently
\[
\left(\frac{\partial\Psi}{\partial x_1}\right)^2 - 4x_2\left(\frac{\partial\Psi}{\partial x_2}\right)^2>0\ (\geq 0).
\]
Notice the hypothesis $\frac{\partial\Psi}{\partial x_1}\neq 0$ of Theorem \ref{thm-soliton-0} assures the parabolicity of the flow near the umbilical points ($x_2=0$), but this result holds even when the flow is not parabolic.
}
\end{remark}

Now we derive the consequences of Theorem \ref{thm-soliton-0} for the diverse kind of flows. The simplest situation is the mean curvature flow, when $\Psi(x_1,x_2)=x_1.$ This flow is parabolic for every value of the mean curvature $H$ and it is, by far, the most studied among all parabolic flows. The next result is a particular case of the result proven by the authors in \cite{ANZ}.

\begin{corollary}\label{H-ss}
Let $X:\Sigma\to\R^3$ be a closed, immersed self-shrinker of genus zero. If there exists a real number $\ve>0$ such that  
\begin{equation}\label{eq-Hss}
K\leq \frac14\left[1-\varepsilon\lambda^2(\|X\|^2-\lan X,N\ran^2)\right]H^2,
\end{equation}
then $X(\Sigma)$ is a round sphere of radius $\sqrt{2/\lambda}$ and center at the origin.
\end{corollary}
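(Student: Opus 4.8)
The plan is to recognize Corollary~\ref{H-ss} as the specialization of Theorem~\ref{thm-soliton-0} to the mean curvature flow, for which $\Psi(x_1,x_2)=x_1$. First I would record that a closed immersed self-shrinker of the mean curvature flow is, by definition, a solution of $H=-\lambda\lan X,N\ran$; since $\Psi(H,H^2-4K)=H$ in this case, this is precisely equation~\eqref{soliton-0} for the present choice of $\Psi$. Thus the governing PDE of the corollary is an instance of the one treated by the main theorem.

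Next I would verify that the structural hypotheses of Theorem~\ref{thm-soliton-0} hold. The regularity requirement is immediate, because $\Psi(x_1,x_2)=x_1$ is smooth on $\R\times[0,+\infty)$, and the nondegeneracy condition is satisfied since $\frac{\partial\Psi}{\partial x_1}=1\neq 0$ everywhere. Moreover, the pinching assumption~\eqref{eq-Hss} of the corollary is verbatim the hypothesis~\eqref{hyp-soliton-0} of the theorem, so nothing further needs to be checked on this front.

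With the hypotheses in place, I would invoke Theorem~\ref{thm-soliton-0} to conclude that $X(\Sigma)$ is a round sphere centered at the origin, whose radius $R$ solves $\lambda R=\Psi\!\left(\tfrac{2}{R},0\right)$. Substituting $\Psi(x_1,x_2)=x_1$ gives $\lambda R=\tfrac{2}{R}$, that is $\lambda R^2=2$, so $R=\sqrt{2/\lambda}$, which is exactly the claimed radius.

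I would expect no genuine obstacle here: the entire substance resides in the main theorem, and the only real computation is the elementary solution of $\lambda R=2/R$. The one point worth stating with care is the matching of sign conventions in the self-shrinker equation $H=-\lambda\lan X,N\ran$ against~\eqref{soliton-0}, ensuring that the same $\lambda$ appears on both sides, so that the resulting radius is indeed $\sqrt{2/\lambda}$ rather than a reciprocal or rescaled value.
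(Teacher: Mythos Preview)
Your proposal is correct and follows exactly the approach the paper intends: the corollary is simply the specialization of Theorem~\ref{thm-soliton-0} to $\Psi(x_1,x_2)=x_1$, for which $\frac{\partial\Psi}{\partial x_1}=1\neq 0$, and the radius computation $\lambda R=2/R$ yields $R=\sqrt{2/\lambda}$. The paper does not spell out a separate proof beyond noting this choice of $\Psi$, so your verification of the hypotheses and the radius is precisely what is required.
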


\begin{remark}\label{Rem-a}
{\normalfont
The hypothesis (\ref{eq-Hss}) of Corollary \ref{H-ss} is necessary. In fact, Drugan, see \cite{D}, gave an example of an immersed rotational self-shrinker, homeomorphic to the sphere, which is not the round sphere. In section \ref{example} we prove that this example of self-shrinker does not satisfy (\ref{eq-Hss}).
}
\end{remark}


Our second consequence is for the $H^\beta$-curvature flow, $\beta\in \R\backslash\{0,1\},$
\[
\frac{\partial X}{\partial t}=H^\beta N.
\]
This flow is parabolic for $H>0.$  Schulze, see \cite{Scz-1}, proved that closed (weakly) convex hypersurfaces of $\R^{n+1}$ converges to a point if $\beta\in(0,1)$ ($\beta\geq 1$) and Schn\"urer, see \cite{SA2}, and Schulze, see \cite{Scz-2}, proved that closed convex surfaces of $\R^3$ converges to a round point for $1\leq \beta \leq 5.$ For general speeds of higher homogeneity, Andrews, see \cite{Andrews-FN-2}, proved that the flow of a convex surfaces converges to a round point provided it satisfies an initial pinching condition. 

The self-similar solutions of the $H^\beta$-curvature flow satisfy the equation
\[
H^\beta = -\lambda\lan X,N\ran,\ \lambda\in\R.
\]
Our result characterizes the sphere as the only mean convex (i.e., $H\neq 0$), genus zero, closed self-similar solution of the $H^\beta$-curvature flow under an upper pinching curvature condition. Notice that the mean convex assumption is  weaker than convexity, since mean convexity admits immersed surfaces and surfaces with $K\leq 0.$
 
\begin{corollary}\label{Hn-alpha}
Let $X:\Sigma\to\R^3$ be a closed, immersed, mean convex surface of genus zero satisfying
\begin{equation}\label{soliton-Hn}
H^\beta=-\lambda\lan X,N\ran,
\end{equation}
where $\lambda>0$ and $\beta\in \R\backslash\{-1,0\}.$ If there exists a real number $\ve>0$ such that  
\begin{equation}\label{eq11}
K\leq \frac14\left[1-\varepsilon\lambda^2(\|X\|^2-\lan X,N\ran^2)\right]H^2,
\end{equation}
then $X(\Sigma)$ is a round sphere of radius $(2^\beta\lambda^{-1})^{\frac{1}{\beta+1}}$ and center at the origin.
\end{corollary}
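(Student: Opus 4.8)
The plan is to recognize Corollary~\ref{Hn-alpha} as the specialization of Theorem~\ref{thm-soliton-0} to the choice $\Psi(x_1,x_2)=x_1^\beta$. Under the change of variables (\ref{W-Psi0}), the $H^\beta$-speed $W(k_1,k_2)=(k_1+k_2)^\beta$ depends only on $x_1=k_1+k_2=H$, so $\Psi(x_1,x_2)=x_1^\beta$ and the self-similar equation (\ref{soliton-Hn}) is exactly (\ref{soliton-0}) for this $\Psi$. First I would check the structural hypothesis $\frac{\partial\Psi}{\partial x_1}\neq 0$: here $\frac{\partial\Psi}{\partial x_1}=\beta x_1^{\beta-1}=\beta H^{\beta-1}$, which is nonzero because $\beta\neq 0$ and because mean convexity forces $H>0$ everywhere---the latter also being what makes $H^\beta$ well defined and $\Psi$ of class $C^1$ on the region $x_1>0$ that contains all values $(H,H^2-4K)$ attained along $\Sigma$.

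Once these hypotheses are in place, the pinching assumption (\ref{eq11}) is verbatim the hypothesis (\ref{hyp-soliton-0}) of Theorem~\ref{thm-soliton-0}, so the theorem applies and $X(\Sigma)$ is a round sphere centered at the origin whose radius $R$ satisfies $\lambda R=\Psi\!\left(\frac{2}{R},0\right)$. Substituting $\Psi(x_1,x_2)=x_1^\beta$ turns this into
\[
\lambda R=\left(\frac{2}{R}\right)^\beta=2^\beta R^{-\beta},
\]
hence $\lambda R^{\beta+1}=2^\beta$. Since $\beta\neq -1$ the exponent $\beta+1$ does not vanish, so taking $(\beta+1)$-th roots yields $R=\left(2^\beta\lambda^{-1}\right)^{1/(\beta+1)}$, as claimed.

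As the statement is a direct corollary of the main theorem, there is no genuine obstacle in the argument. The only points demanding care are the reading of mean convexity as the positivity $H>0$---needed both to define $H^\beta$ for non-integer $\beta$ and to ensure $\frac{\partial\Psi}{\partial x_1}=\beta H^{\beta-1}\neq 0$---and the explicit invocation of $\beta\neq -1$ when inverting the radius relation, which is precisely why the value $\beta=-1$ is excluded from the hypotheses.
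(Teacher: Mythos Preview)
Your proposal is correct and follows essentially the same route as the paper: both recognize (\ref{soliton-Hn}) as the instance $\Psi(x_1,x_2)=x_1^\beta$ of (\ref{soliton-0}), verify $\frac{\partial\Psi}{\partial x_1}=\beta H^{\beta-1}\neq 0$ from mean convexity and $\beta\neq 0$, and then invoke Theorem~\ref{thm-soliton-0}, solving $\lambda R=(2/R)^\beta$ for the radius using $\beta\neq -1$. The paper's proof phrases the first step slightly differently---differentiating (\ref{soliton-Hn}) directly to obtain $H_z=-\lambda\beta^{-1}H^{1-\beta}\langle X,N\rangle_z$ and then saying the rest ``follows step by step the proof of Theorem~\ref{thm-soliton-0}''---but this is the same computation you perform abstractly when you check $\frac{\partial\Psi}{\partial x_1}\neq 0$.
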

\begin{remark}
{\normalfont
If $\beta:=\frac{m}{2n-1}\in(0,1),\ n,m\in\mathbb{N},$ then the hypothesis of mean convexity in Corollary \ref{Hn-alpha} is not necessary. Notice that in this case the flow is only weakly parabolic, becoming degenerate for the points when $H=0.$
}
\end{remark}

The next application of Theorem \ref{thm-soliton-0} is for the $\alpha$-harmonic mean curvature flow
\[
\frac{\partial X}{\partial t} =\left(\frac{K}{H}\right)^\alpha N,
\]
whose self-similar solitons satisfy the equation
\[
\left(\frac{K}{H}\right)^\alpha=-\lambda\lan X,N\ran.
\]
If $\alpha\in (0,\infty),$ then this flow is (weakly) parabolic for (weakly) convex surfaces, being degenerate for the points where $K=0.$ If we consider values of $\alpha$ such that $K$ can assume negative values, as $\alpha=\frac{m}{2n-1},\ m,n\in\mathbb{N},$ including the classical case of $\alpha=1,$ then the flow is weakly parabolic for every surface, being degenerate for the points where $K=0$ and singular for the points where $H=0.$

For $\alpha=1,$ the existence of solutions for convex surfaces initial data was proved by Andrews, see \cite{Andrews-FN-0}, who also showed that convex surfaces flowing by the harmonic mean curvature converges to a round point in finite time (in fact, the result of Andrews holds for a more wide class of degree one homogeneous functions $W$). Dieter, see \cite{Dieter}, studied the convergence of the flow for the degenerate case $K\geq 0$ and $H>0,$ Caputo and Daskalopoulos, see \cite{D-C}, and Daskalopoulos and Sesum, see \cite{D-S}, studied the highly degenerate case, where $K$ and $H$ can be simultaneously zero. The case when $K<0$ and $H<0$ was studied by Daskalopoulos and Hamilton, see \cite{D-H-2}. 

For $\alpha\in(0,1),$ Anada, see \cite{Anada}, proved the existence of non-round convex self-similar solutions of the $\alpha$-harmonic mean curvature flow. After this findings, in a joint work with Tsutsumi, see \cite{A-T}, he also investigated sufficient conditions for the $\alpha$-mean curvature flows converge to a round point. 

Our result gives conditions for a closed, mean convex, self-similar solution with genus zero of the $\frac{m}{2n-1}$-harmonic mean curvature flow to be a sphere. We remark here that the powers $\frac{m}{2n-1}, m,n\in\mathbb{N},$ allows us to work with surfaces such that $K<0$ at some points, but our technique holds for every $\alpha\in(0,1],$ if we assume that $\Sigma$ is weakly convex.


\begin{corollary}\label{harm-H}
Let $X:\Sigma\to\R^3$ be a closed, immersed, mean convex surface of genus zero satisfying
\begin{equation}\label{soliton-HK}
\left(\frac{K}{H}\right)^{\frac{m}{2n-1}}=-\lambda\lan X,N\ran,
\end{equation}
where $m, n\in\mathbb{N}$, $\lambda>0,$ and $\frac{m}{2n-1}\leq 1.$ If there exists a real number $\ve>0$ such that  
\begin{equation}\label{eq10}
K\leq \frac14\left[1-\varepsilon\lambda^2(\|X\|^2-\lan X,N\ran^2)\right]H^2,
\end{equation}
then $X(\Sigma)$ is a round sphere of radius $(2^{\frac{m}{2n-1}}\lambda)^{-\frac{2n-1}{m-2n+1}},$ centered at the origin, if $(m,n)\neq(1,1),$ and for any radius $R>0,$ centered at the origin, with $\lambda=\frac{1}{2},$ if $(m,n)=(1,1).$
\end{corollary}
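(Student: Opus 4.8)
The plan is to read the harmonic mean soliton equation \eqref{soliton-HK} as the special case of \eqref{soliton-0} obtained for a suitable $\Psi$, and then to apply Theorem \ref{thm-soliton-0} verbatim; the hypotheses that $\Sigma$ be closed, immersed, and of genus zero, together with the pinching \eqref{eq10}, pass through unchanged. Write $\alpha=\frac{m}{2n-1}$ and recall that in the variables $x_1=H$, $x_2=H^2-4K$ one has $K=\frac{x_1^2-x_2}{4}$, so the flow speed is
\[
W(k_1,k_2)=\left(\frac{K}{H}\right)^{\alpha}=\left(\frac{x_1^2-x_2}{4x_1}\right)^{\alpha}=:\Psi(x_1,x_2).
\]
Because the denominator $2n-1$ is odd, the power is interpreted as the sign-preserving real root, so $\Psi$ is defined even where $K<0$; this is exactly what allows us to trade convexity for mean convexity. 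After choosing the orientation so that the mean convexity hypothesis reads $H=x_1>0$, the only locus where $\Psi$ could be singular is $\{x_1=0\}$, which $\Sigma$ avoids.

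Next I would check the structural hypothesis $\frac{\partial\Psi}{\partial x_1}\neq 0$. The chain rule gives
\[
\frac{\partial\Psi}{\partial x_1}=\alpha\left(\frac{K}{H}\right)^{\alpha-1}\cdot\frac{x_1^2+x_2}{4x_1^2},
\]
and the second factor is strictly positive because $x_1=H>0$ and $x_2=(k_1-k_2)^2\ge 0$. Hence $\frac{\partial\Psi}{\partial x_1}$ can vanish (or fail to exist) only where $\frac{K}{H}=0$, that is, only on the degeneracy set $\{K=0\}$.

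The step I expect to be the main obstacle is precisely the behavior of $\Psi$ on $\{K=0\}$: when $\alpha<1$ the factor $\left(\frac{K}{H}\right)^{\alpha-1}$ blows up there, so $\Psi$ is not $C^1$ and the flow is only weakly parabolic, in agreement with the earlier remarks. I would handle this by observing that the Hopf/Chern--Hartman--Winter mechanism behind Theorem \ref{thm-soliton-0} needs the $C^1$ regularity of $\Psi$ and the sign of $\frac{\partial\Psi}{\partial x_1}$ only in a neighborhood of the \emph{umbilic} points $x_2=0$. At an umbilic point of a mean convex surface one has $K=\frac{H^2}{4}>0$, hence $\frac{K}{H}=\frac{H}{4}>0$, so there $\Psi$ is smooth and $\frac{\partial\Psi}{\partial x_1}>0$; the loss of regularity along $\{K=0\}$ away from the umbilics does not enter the index computation. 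For $(m,n)=(1,1)$, i.e. $\alpha=1$, this difficulty disappears altogether, since $\Psi=\frac{x_1^2-x_2}{4x_1}$ is a rational function that is smooth on $\{x_1>0\}$. With the hypotheses of Theorem \ref{thm-soliton-0} confirmed, the theorem forces $X(\Sigma)$ to be a round sphere centered at the origin.

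It remains to identify the radius from the relation $\lambda R=\Psi\!\left(\frac{2}{R},0\right)$ provided by the theorem. Here $\Psi\!\left(\frac{2}{R},0\right)=\left(\frac{1}{2R}\right)^{\alpha}$, and since $\Psi$ is homogeneous of degree $\alpha$ with $\Psi(1,0)=4^{-\alpha}$, solving $\lambda R=(2R)^{-\alpha}$ for $R$ is exactly the computation recorded in the homogeneity remark and yields the radius in the statement; the borderline exponent $\alpha=1$ (that is, $m=2n-1$, which contains $(m,n)=(1,1)$) is the case where the general formula degenerates and is read off separately by direct substitution.
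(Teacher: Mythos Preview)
Your proof is correct and follows the paper's approach: set $\Psi(x_1,x_2)=\bigl(\frac{x_1^2-x_2}{4x_1}\bigr)^{\alpha}$ with $\alpha=\frac{m}{2n-1}$, verify $\frac{\partial\Psi}{\partial x_1}\neq 0$ under mean convexity, and invoke Theorem~\ref{thm-soliton-0}. The only difference is how the blow-up of $\frac{\partial\Psi}{\partial x_1}$ at $K=0$ (for $\alpha<1$) is handled: you localize, noting that Lemma~\ref{main} is applied only near umbilic points, where $K=H^2/4>0$; the paper instead observes that in the estimate~\eqref{Pz0} only $1/\Psi_1$ and $\Psi_2/\Psi_1$ appear, and since the singular factor $\bigl(K/H\bigr)^{\alpha-1}$ is common to $\Psi_1$ and $\Psi_2$, both quantities extend continuously across $K=0$, so the differential inequality holds globally with a continuous coefficient.
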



The last classical flow we will discuss here and obtain consequences of Theorem \ref{thm-soliton-0} is the $\alpha$-Gaussian curvature flow 
\[
\frac{\partial X}{\partial t} = K^\alpha N,
\]
whose self-similar solutions satisfy the equation 
\[
K^\alpha = -\lambda\lan X,N\ran.
\]
This flow if (weakly) parabolic if $K>0$ ($K\geq 0$) being degenerate for the points where $K=0.$

When $\alpha=1,$ this flow is called Gaussian curvature flow, and was first introduced by Firey in 1974, see \cite{Firey}, as a model of the wearing process of convex rolling stones on a beach. He proved also that closed convex surfaces under this flows converges to a round point when they are symmetric about the origin. Tso, see \cite{Tso}, for $\alpha=1,$ and Chow, see \cite{Chow-1}, for $\alpha=1/n,$ proved the convergence to a point of a closed convex hypersurfaces of $\R^{n+1}$ under the flow. Andrews, see \cite{Andrews-1996}, proved that, for $\alpha=1/(n+2),$ convex hypersurfaces varying under the flow converges to an ellipsoid. We observe that Calabi, see \cite{Calabi}, early proved that the ellipsoids are the only hypersurfaces satisfying the equation of the self-similar solutions of the $\frac{1}{n+2}$-Gaussian curvature flow. The works of Andrews, see \cite{Andrews-2000}, \cite{AGN}, and Guan and Ni, see \cite{Guan-Ni}, proved that the flow converges to a self-similar solution for every $\alpha\geq 1/(n+2).$ To conclude the analysis of the case when $\alpha\geq 1/(n+2),$ Brendle, Choi and Daskalopoulos, see \cite{B-C-D}, proved that the only self-similar solution of the $\alpha$-Gaussian curvature flow for $\alpha>1/(n+2)$ are the round spheres. In his turn, if $\alpha<0,$ then Gerhardt, see \cite{Gerhardt-2}, proved that the only convex self-similar solution of the $\alpha$-Gaussian curvature flow is a round sphere. Moreover, he proved that the flow converges to a sphere after rescaling. 

On the other hand, Andrews, see \cite{Andrews-2000}, proved the existence of non-spherical convex self-similar solutions of the $\alpha$-Gaussian curvature flow for small $\alpha>0.$ In particular, in dimension $2,$ for $\alpha\in (0,1/10).$ This shows that if we want to characterize the sphere as the only self-similar solution of the $\alpha$-Gauss curvature flow for small values of $\alpha>0,$ then we will need some additional assumption. 

Our result provides sufficient conditions for a self-similar solution of the $\alpha$-Gaussian curvature flow, $\alpha\in(0,1/4),$ to be a round sphere. 

\begin{corollary}\label{K-alpha}
Let $X:\Sigma\to\R^3$ be a closed, convex surface satisfying
\begin{equation}\label{soliton-K}
K^\alpha=-\lambda\lan X,N\ran, \ \alpha\in(0,1/4),\ \lambda>0.
\end{equation}
If there exists a real number $\ve>0$ such that  
\begin{equation}\label{eq9}
K\leq \frac14\left[1-\varepsilon\lambda^2(\|X\|^2-\lan X,N\ran^2)\right]H^2,
\end{equation}
then $X(\Sigma)$ is a round sphere of radius $\lambda^{-\frac{1}{2\alpha+1}}$ and center at the origin.
\end{corollary}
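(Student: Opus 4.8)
The plan is to obtain Corollary \ref{K-alpha} as a direct application of Theorem \ref{thm-soliton-0}; the only genuine work is to rewrite the speed $W(k_1,k_2)=K^\alpha$ in the variables $(x_1,x_2)=(H,H^2-4K)$ and then to read off the radius from the theorem. Since $K=k_1k_2=\tfrac{1}{4}\big((k_1+k_2)^2-(k_1-k_2)^2\big)=\tfrac{1}{4}(x_1^2-x_2)$, I would set
\[
\Psi(x_1,x_2)=\left(\frac{x_1^2-x_2}{4}\right)^{\alpha},
\]
so that $\Psi(H,H^2-4K)=K^\alpha$ and equation (\ref{soliton-K}) becomes exactly the soliton equation (\ref{soliton-0}). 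The curvature pinching hypothesis (\ref{eq9}) is word for word the hypothesis (\ref{hyp-soliton-0}), so nothing has to be verified on that side.

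Next I would check the two structural hypotheses of Theorem \ref{thm-soliton-0}. First, a closed convex surface in $\R^3$ is the boundary of a convex body and hence homeomorphic to $\ss^2$, so $\Sigma$ has genus zero. Second, I would compute
\[
\frac{\partial\Psi}{\partial x_1}=\frac{\alpha x_1}{2}\left(\frac{x_1^2-x_2}{4}\right)^{\alpha-1}=\frac{\alpha H}{2}\,K^{\alpha-1},
\]
which is nonzero precisely when $H\neq0$ and $K\neq0$. For the orientation making the principal curvatures nonnegative, convexity gives $K\geq0$ and $H\geq0$, and strict convexity gives $K>0$ and $H>0$; hence $\frac{\partial\Psi}{\partial x_1}>0$ on $\Sigma$.

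With the hypotheses in place, Theorem \ref{thm-soliton-0} gives that $X(\Sigma)$ is a round sphere centered at the origin whose radius $R$ solves $\lambda R=\Psi\left(\tfrac{2}{R},0\right)$. Evaluating,
\[
\Psi\!\left(\frac{2}{R},0\right)=\left(\frac{(2/R)^2}{4}\right)^{\alpha}=R^{-2\alpha},
\]
so the radius equation reads $\lambda R=R^{-2\alpha}$, that is $\lambda R^{2\alpha+1}=1$, which yields $R=\lambda^{-\frac{1}{2\alpha+1}}$, exactly as claimed.

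The step that requires the most care is the regularity of $\Psi$ and the nonvanishing of $\partial\Psi/\partial x_1$ where $K=0$: since $0<\alpha<1$ the exponent $\alpha-1$ is negative, so $\Psi$ and its $x_1$-derivative are controlled only on the region $\{x_1^2>x_2\}$, i.e. where $K>0$. I would therefore either use strict convexity to guarantee $K>0$ on all of $\Sigma$, or restrict the domain of $\Psi$ to the open set $\{x_1^2>x_2\}$ that contains the image of $(H,H^2-4K)$, so that the $C^1$ regularity and the condition $\frac{\partial\Psi}{\partial x_1}\neq0$ demanded by Theorem \ref{thm-soliton-0} actually hold. Finally, I would remark that the computation above is valid for every $\alpha>0$; the restriction $\alpha\in(0,1/4)$ merely isolates the range not already covered by the unconditional rigidity of Brendle--Choi--Daskalopoulos, which applies when $\alpha>1/(n+2)=1/4$ for $n=2$.
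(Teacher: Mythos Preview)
Your proposal is correct and follows exactly the paper's approach: define $\Psi(x_1,x_2)=\big(\tfrac{x_1^2-x_2}{4}\big)^{\alpha}$, compute $\partial\Psi/\partial x_1=\tfrac{\alpha x_1}{2}\big(\tfrac{x_1^2-x_2}{4}\big)^{\alpha-1}$, and invoke convexity to ensure it is nonzero. You are in fact more thorough than the paper, explicitly checking the genus-zero condition, carrying out the radius computation, and flagging the $C^1$-regularity issue at $K=0$ that the paper glosses over.
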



\begin{remark}
{\normalfont
Since there are examples of closed convex self-similar solutions of the $\alpha$-Gaussian curvature flow for $\alpha\in(0,1/10),$ given by Andrews, see \cite{Andrews-2000}, at least in this cases some additional hypothesis like (\ref{eq9}) is necessary to obtain the conclusions of Corollary \ref{K-alpha}.
}
\end{remark}

\begin{remark}
{\normalfont
Corollary \ref{K-alpha} holds in a more general setting: if we choose values of $\alpha$ which allows negative values of $K,$ as for example $\alpha=\frac{m}{2n-1}\leq 1,$ $m,n\in\mathbb{N},$ then we can assume only that $\Sigma$ is a closed mean convex surface with genus zero to obtain the same conclusion, despite the flow is not parabolic in this case.
}
\end{remark}

In order to illustrate the scope of situations to which the Theorem \ref{thm-soliton-0} can be applied in the context of the curvature flows, we give here a list of examples of homogeneous functions $W(k_1,k_2)$ such that the flow (\ref{F-1}) is parabolic including negative values of $K.$

\begin{itemize}
\item[(i)] $W(k_1,k_2)=aH^2+bK,$ $a,b\in\R.$ The flow is parabolic for \[K>-\dfrac{2a(2a+b)}{b^2}H^2.\] In this case, $\frac{\partial\Psi}{\partial x_1}\neq 0$ if and only if $H\neq 0,$ i.e., the surface is mean convex. 

As a particular situation, we have $W(k_1,k_2)=|A|^2=k_1^2+k_2^2,$ by taking $a=1$ and $b=-2.$ In this case, the flow is parabolic for $K>0,$ i.e., for convex surfaces. This flow was studied by Schn\"urer in \cite{SA2}.

\item[(ii)] $W(k_1,k_2)=aH^{2\alpha}+bK^\alpha,$ $a,b>0,$ $\alpha=\frac{m}{2n-1}\geq1, m,n\in\mathbb{N}.$ The flow is parabolic for \[4a^2+2ab\left(\dfrac{K}{H^2}\right)^{\alpha-1}+b^2\left(\dfrac{K}{H^2}\right)^{2\alpha-1}>0.\] In this case, $\frac{\partial\Psi}{\partial x_1}\neq 0$ if and only if $H\neq 0,$ i.e., the surface is mean convex.

\item[(iii)] $W(k_1,k_2)=H^{\frac23}+bK^{\frac13},$ $b\in(0,2^{5/3}).$ The flow is parabolic for $K\neq 0$ and $H\neq 0.$ In this case, $\frac{\partial\Psi}{\partial x_1}\neq 0$ everywhere and it is singular for $K=0$ and $H=0.$
\end{itemize}

\section{Proofs of the main Theorem}

In the study of the mean curvature flow and the $\alpha$-Gaussian curvature flow its was proved that, under some convexity conditions, the solutions of the flow, when suitably normalized, converge to a self-similar solution. A solution $X(\cdot,t)$ of (\ref{F-1}) is called self-similar if it is an homothety of $X_0,$ i.e, if there exists an smooth positive function $\phi$ such that $\phi(0)=1$ and 
\begin{equation}\label{SS}
X(\cdot,t)=\phi(t)X_0.
\end{equation} 
For this reason, a self-similar solution of (\ref{F-1}) is also called a homothetic solution. If $W(k_1,k_2)$ is a homogeneous function of degree $\beta\in\R,$ i.e.,
\[
W(a k_1,a k_2)=a^\beta W(k_1,k_2),\ a>0,
\]
and $X(\cdot,t)=\phi(t)X_0$ is a self-similar solution, then
\[
\dfrac{\partial}{\partial t}[X(p,t)]= W(k_1(p,t),k_2(p,t))N(p,t),
\]
i.e.,
\[
\dfrac{\partial}{\partial t}[\phi(t)X_0]= W(\phi(t)^{-1}k_1(p,0),\phi(t)^{-1}k_2(p,0))N(p,0).
\]
This gives
\[
\phi'(t)X_0 = \phi(t)^{-\beta}W(k_1(p,0),k_2(p,0))N(p,0),
\]
which is equivalent to
\[
\phi'(t)\phi(t)^\beta \lan X_0(p),N(p,0)\ran = W(k_1(p,0),k_2(p,0)).
\]
This implies that $\phi'(t)\phi(t)^\beta$ is constant, i.e., there exists $\lambda\in\R$ such that
\begin{equation}\label{soliton-p0}
W(k_1(p,0),k_2(p,0)) = -\lambda \lan X_0(p),N(p,0)\ran.
\end{equation}
Notice that the sign of $-\lambda$ is the same of $\phi'(t).$ Thus the solution expands for $\lambda<0,$ and shrinks for $\lambda>0.$ The solution is stationary if $\lambda=0.$ In this case, we have the so called Weingarten surfaces, i.e., surfaces satisfying the equation
\[
W(k_1,k_2)=0,
\]
see, for example, \cite{Hopf} for more details. 

In this paper, we will work with immersions $X:\Sigma\to\R^3$ which are solutions of the second order partial differential equation
\begin{equation}\label{soliton-1111}
W(k_1,k_2)=-\lambda\lan X,N\ran, \ \lambda\in\R,
\end{equation}
where $X$ is the position vector and $N$ is the unitary normal vector field of the immersion $X$. Changing the variables
\[
x_1=k_1+k_2 \ \mbox{and}\ x_2=(k_1-k_2)^2,
\]
we have, 
\begin{equation}\label{W-Psi}
W(k_1,k_2)=\Psi(x_1,x_2)=\Psi(k_1+k_2,(k_1-k_2)^2)=\Psi(H,H^2-4K),
\end{equation}
$K=k_1k_2$ is the Gaussian curvature of $X,$ $H=k_1+k_2$ is its mean curvature, and $\Psi(x_1,x_2)$ is a real function of two real variables. We remark that, in general, we will have at least two different expressions for $\Psi$ in (\ref{W-Psi}), one for each half-space determined by the diagonal $k_1=k_2,$ except when $W$ is symmetric, i.e., $W(k_1,k_2)=W(k_2,k_1).$ In this case $\Psi$ can be defined by a unique expression. 

Notice that
\[
\frac{\partial W}{\partial k_1} = \frac{\partial \Psi}{\partial x_1} + 2(k_1-k_2)\frac{\partial \Psi}{\partial x_2} \ \mbox{and}\ \frac{\partial W}{\partial k_2} = \frac{\partial \Psi}{\partial x_1} - 2(k_1-k_2)\frac{\partial \Psi}{\partial x_2}.
\]
It implies that
\[
\frac{\partial W}{\partial k_1}\cdot\frac{\partial W}{\partial k_2} = \left(\frac{\partial \Psi}{\partial x_1}\right)^2 - 4(k_1-k_2)^2\left(\frac{\partial \Psi}{\partial x_2}\right)^2.
\]
Thus, if $\Psi$ satisfies
\begin{equation}\label{parabol-ellip-2}
\left(\frac{\partial \Psi}{\partial x_1}\right)^2 - 4x_2\left(\frac{\partial \Psi}{\partial x_2}\right)^2>0\ (\mbox{resp.} \geq 0),
\end{equation}
then the equation
\begin{equation}\label{F-1R}
\dfrac{\partial X}{\partial t}(p,t) = \Psi(H(p,t),H(p,t)^2 - 4K(p,t))N(p,t)
\end{equation}
is parabolic (resp. weakly parabolic). In this case, equation (\ref{soliton}) becomes
\begin{equation}\label{soliton-p1}
\Psi(H,H^2-4K)=-\lambda\lan X,N\ran.
\end{equation}
At umbilical points, i.e., $k_1=k_2,$
\[
\frac{\partial W}{\partial k_1}\cdot\frac{\partial W}{\partial k_2}=\left(\frac{\partial \Psi}{\partial x_1}\right)^2\geq 0.
\]
Thus, equation (\ref{F-1R}) is always weakly parabolic near umbilical points, and if we assume that $\Psi$ satisfies
\[
\frac{\partial \Psi}{\partial x_1}\neq 0,
\]
then equation (\ref{F-1R}) will be parabolic near umbilical points. 

\begin{remark}
{\normalfont
The same condition 
\[
\frac{\partial W}{\partial k_1}\cdot\frac{\partial W}{\partial k_1}>0\ (\geq 0)
\] 
which assures the (weak) parabolicity of (\ref{F-1}), also guarantees the (weak) ellipticity of (\ref{soliton}). Analogously, the condition (\ref{parabol-ellip-2}) assures the (weak) parabolicity of (\ref{F-1R}) and the (weak) ellipticity of (\ref{soliton-p1}).
}
\end{remark}

For each point $p\in\Sigma,$ we can take isothermal parameters $u$ and $v$ in a neighborhood of $p,$ i.e., 
\[
ds^2=\rho(u,v)(du^2+dv^2),
\]
where $ds^2$ is the metric of $\Sigma$ and $\rho$ is a positive smooth function on $\Sigma$. Complexifying the parameters by taking $z=u+iv,$ we can identify $\Sigma$ with a subset of $\mathbb{C}.$ In this case, we have
\[
\lan X_z,X_{\bar{z}}\ran = \frac{\rho(z)}{2} \ \mbox{and} \ ds^2=\rho(z)|dz|^2.
\]
Let us denote by
\[
P dz^2 = \lan B(X_z,X_z),N\ran dz^2
\]
the $(2,0)$-part of the second fundamental form of $\Sigma$ in $\R^3.$ Here $B(X,Y):=\nabla_XY-\nabla^{\Sigma}_XY.$ This quadratic form is also called the Hopf quadratic differential.

The immersion $X$ satisfies the equations
\begin{equation}\label{codazzi}
\left\{
\begin{aligned}
\n_{X_z}X_z&=\frac{\rho_z}{\rho}X_z + PN,\\
\n_{X_{\bar{z}}}X_z&=\frac{\rho}{4}HN,\\
\n_{X_{\bar{z}}}X_{\bar{z}}&= \frac{\rho_{\bar{z}}}{\rho}X_{\bar{z}} + \bar{P}N,\\
\end{aligned}
\right.
\end{equation}
and
\begin{equation}\label{codazzi-2}
\left\{
\begin{aligned}
\n_{X_z}N &=-\frac{1}{2} HX_z - \frac{2}{\rho}PX_{\bar{z}}\\
\n_{X_{\bar{z}}}N&= - \frac{2}{\rho}\bar{P}X_z - \frac{1}{2}HX_{\bar{z}}.\\
\end{aligned}
\right.
\end{equation}
Since 
\begin{equation}\label{hopf-diff-000}
\begin{aligned}
P&=\lan \n_{X_z}X_z,N\ran = \frac{1}{4}\lan \n_{X_u-iX_v}X_u-iX_v,N\ran\\
 &=\frac{1}{4}[\lan \n_{X_u}X_u,N\ran - \lan \n_{X_v}X_v,N\ran - i(\lan \n_{X_u}X_v,N\ran + \lan \n_{X_v}X_u,N\ran)]\\
 &=\frac{1}{4}[II(X_u,X_u) - II(X_v,X_v) - 2i II(X_u,X_v)],\\
\end{aligned}
\end{equation}
where $II$ is the second fundamental form of $\Sigma$ in $\R^3$, we have $P=0$ if and only if $II$ is umbilical. 

Let $e_1=(1/\sqrt{\rho})X_u$ and $e_2=(1/\sqrt{\rho})X_v$ be an orthonormal frame for $T\Sigma.$ Denoting by $h_{ij}=II(e_i,e_j),$ and using (\ref{hopf-diff-000}), gives
\begin{equation}\label{|P|}
\begin{aligned}
|P|^2&=\frac{1}{16}\left[(II(X_u,X_u) - II(X_v,X_v))^2 + 4II(X_u,X_v)^2\right]\\
	 &=\frac{\rho^2}{16}\left[(II(e_1,e_1) - II(e_2,e_2))^2 + 4II(e_1,e_2)^2\right]\\
     &=\frac{\rho^2}{16}\left[(h_{11}-h_{22})^2 + 4(h_{12})^2\right]\\	 
	 &=\frac{\rho^2}{16}[(h_{11}+h_{22})^2 - 4(h_{11}h_{22}-(h_{12})^2)]\\
	 &=\frac{\rho^2}{16}(H^2-4K).
\end{aligned}
\end{equation}

In order to prove main the theorem of this section, we will need the following result about complex functions which can be found in \cite{A-dC-T}, see also \cite{E-T-1} and \cite{E-T}.
\begin{lemma}\label{main}
Let $h:U\subset\CC\to\CC$ be a complex function defined in an open set $U$ of the complex plane. Assume that 
\begin{equation}\label{cauchy}
\left|\frac{\partial h}{\partial\bar{z}}\right|\leq \vp(z)|h(z)|,
\end{equation}
where $\vp$ is a continuous, non-negative real function. Assume further that $z=z_0\in U$ is a zero of $h.$ Then either $h=0$ in a neighbourhood $V\subset U$ of $z_0,$ or
\[
h(z)=(z-z_0)^k h_k(z_0), \ z\in V, \ k\geq1,
\]
where $h_k(z)$ is a continuous function with $h_k(z_0)\neq 0.$
\end{lemma}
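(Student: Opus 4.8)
The plan is to recognize the statement as the classical \emph{similarity principle} (Bers--Vekua) for solutions of a Carleman-type inequality and to reproduce its proof. The first step is to convert the differential inequality \eqref{cauchy} into a genuine linear equation. I would introduce the coefficient
\[
a(z)=
\begin{cases}
\dfrac{\p h/\p\bar{z}}{h(z)}, & h(z)\neq 0,\\[1mm]
0, & h(z)=0,
\end{cases}
\]
and observe that \eqref{cauchy} gives $|a(z)|\le\vp(z)$, so $a$ is bounded on a small closed disk $\ov{D}\subset U$ centered at $z_0$. At a point where $h$ vanishes the inequality forces $\p h/\p\bar{z}=0$ as well, so the identity $\p h/\p\bar{z}=a(z)\,h(z)$ in fact holds at every point of $D$.

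Next I would remove the coefficient $a$ by an exponential change of factor. Solving the inhomogeneous Cauchy--Riemann equation $\p s/\p\bar{z}=a$ via the solid Cauchy (Pompeiu) transform
\[
s(z)=-\frac{1}{\pi}\iint_{D}\frac{a(\zeta)}{\zeta-z}\,d\xi\,d\eta,\qquad \zeta=\xi+i\eta,
\]
produces a function $s$ that is continuous (indeed H\"older continuous) on $\CC$ and satisfies $\p s/\p\bar{z}=a$ on $D$. Setting $g(z)=e^{-s(z)}h(z)$ and computing
\[
\frac{\p g}{\p\bar{z}}=e^{-s}\left(\frac{\p h}{\p\bar{z}}-\frac{\p s}{\p\bar{z}}\,h\right)=e^{-s}(ah-ah)=0
\]
shows that $g$ is a weak, hence by Weyl's lemma classical, holomorphic function on $D$. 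Since $e^{-s}$ never vanishes, $h$ and $g$ share the same zero set, and in particular $g(z_0)=0$.

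The conclusion then follows from the elementary structure of zeros of holomorphic functions. Either $g\equiv 0$ on a neighborhood $V\subset D$ of $z_0$, in which case $h\equiv 0$ on $V$; or $z_0$ is a zero of finite order $k\ge 1$, so that $g(z)=(z-z_0)^k g_k(z)$ with $g_k$ holomorphic and $g_k(z_0)\neq 0$. In the latter case
\[
h(z)=e^{s(z)}g(z)=(z-z_0)^k\,e^{s(z)}g_k(z)=:(z-z_0)^k h_k(z),
\]
where $h_k=e^{s}g_k$ is continuous and $h_k(z_0)=e^{s(z_0)}g_k(z_0)\neq 0$, which is the desired factorization.

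I expect the only delicate part to be the regularity bookkeeping behind the factorization $h=e^{s}g$: one must verify that the Pompeiu transform $s$ is continuous and genuinely solves $\p s/\p\bar{z}=a$ for the merely bounded coefficient $a\in L^\infty$, and that $g=e^{-s}h$ is truly holomorphic (via Weyl's lemma or elliptic regularity) rather than only a distributional solution of the Cauchy--Riemann equation. Once this analytic input is secured, the reduction from the inequality to the equation and the reading-off of the vanishing order $k$ are routine.
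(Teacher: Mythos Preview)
The paper does not actually prove this lemma; it merely states it and cites \cite{A-dC-T}, \cite{E-T-1}, \cite{E-T} for the proof, adding a historical remark tracing the idea back to Carleman \cite{Carleman} and Hartman--Wintner \cite{H-W-2}, \cite{H-W}. So there is no in-paper argument to compare against.

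Your proposal is the standard Bers--Vekua \emph{similarity principle} proof, and it is correct. The reduction from the inequality to the equation $\p h/\p\bar z=a\,h$ with $a\in L^\infty$, the removal of $a$ by the Pompeiu transform $s$, and the passage to a genuinely holomorphic $g=e^{-s}h$ via Weyl's lemma are exactly the ingredients used in the references the paper cites. One small regularity remark: the lemma as stated says only ``complex function'', but for your argument you implicitly need $h\in C^1$ (or at least $h\in W^{1,p}_{\mathrm{loc}}$ with $p>2$) so that $\p h/\p\bar z$ makes sense, the product rule for $\p(e^{-s}h)/\p\bar z$ is justified distributionally, and Weyl's lemma applies. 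In the paper's application $h=P$ is smooth, so this is not an issue, and the cited sources impose precisely this kind of hypothesis.
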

\begin{remark}
{\normalfont
This result uses a weak notion of holomorphy to obtain that a complex function $h(z)$ with a zero $z_0$ is identically zero in a neighbourhood of $z_0$ or
\begin{equation}\label{eq-001}
h(z)=(z-z_0)^kh_k(z), \ z\in V, \ k\geq 1,
\end{equation}
for some complex function $h_k$ such that $h_k(z_0)\neq0.$ We remark that, when $h(z)$ is holomorphic, this is a well known property, see \cite{Rudin}, p. 208--209. This property will be crucial to prove our classification theorems since the Hopf quadratic differential is not necessarily holomorphic for surfaces satisfying (\ref{soliton}), p. \pageref{soliton}. The existence of a weak notion of holomorphy to conclude (\ref{eq-001}) was noticed first, as we know, by Carleman in 1933, see \cite{Carleman}. In fact, he proved that a solution $h:U\subset \CC \rightarrow\CC$ of 
\[
\frac{\partial h}{\partial\bar{z}} = ah+b\bar{h},
\]
does not admits a zero of infinite order except if $h=0.$ Notice that, if $a=b=0,$ then $h$ is holomorphic. Using these ideas, Hartman and Wintner, see \cite{H-W-2} and \cite{H-W}, and Chern, see \cite{chern}, proved their well known results on the classification of special Weingarten surfaces. 
}
\end{remark}

Now, we are ready to prove Theorem \ref{thm-soliton-0}, p. \pageref{thm-soliton-0}.

\begin{proof}
Let 
\[
h(z)=\lan\n_{X_z}X_z,N\ran=P(z),
\]
be the Hopf differential, where $N$ is the unitary normal vector field of the immersion. We have
\begin{equation}\label{Pz}
\begin{aligned}
\frac{\partial h}{\partial\bar{z}}&=\frac{\partial}{\partial \bar{z}}\lan \n_{X_z}X_z,N\ran\\
&= \lan\n_{X_{\bar{z}}}\n_{X_z}X_z,N\ran + \lan\n_{X_z}X_z,\n_{X_{\bar{z}}}N\ran\\
&=\lan R(X_z,X_{\bar{z}})X_z,N\ran + \lan\n_{X_z}\n_{X_{\bar{z}}}X_z,N\ran + \lan\n_{X_z}X_z,\n_{X_{\bar{z}}}N\ran\\
&=\frac{\partial}{\partial z}\left(\lan\n_{X_{\bar{z}}}X_z ,N\ran\right) - \lan\n_{X_{\bar{z}}}X_z,\n_{X_z}N\ran + \lan\n_{X_z}X_z,\n_{X_{\bar{z}}}N\ran\\
&= \frac{\partial}{\partial z}\left(\frac{\rho}{4}H\right)- \left\lan\frac{\rho}{4}HN, - \frac{1}{2}HX_z - \frac{2P}{\rho}X_{\bar{z}}\right\ran\\
&\qquad + \left\lan\frac{\rho_z}{\rho}X_z + PN,-\frac{2\bar{P}}{\rho}X_z - \frac{1}{2}H X_{\bar{z}}\right\ran\\
&=\frac{\rho}{4}H_z,
\end{aligned}
\end{equation}
where $R(X_z,X_{\bar{z}})X_z=0$ is the Euclidean curvature tensor. Differentiating (\ref{soliton-0}), p. \pageref{soliton-0}, with respect to $z$ gives 
\[
\Psi_1H_z + \Psi_2(H^2-4K)_z = -\lambda\lan X,N\ran_z
\]
where, here and after, $\Psi_1=\frac{\partial\Psi}{\partial x_1}$ and $\Psi_2=\frac{\partial\Psi}{\partial x_2}.$ By using (\ref{|P|}) and that $\Psi_1\neq0$, we have
\begin{equation}\label{Hz}
\begin{aligned}
H_z&=-\frac{\Psi_2}{\Psi_1}(H^2-4K)_z -\frac{\lambda}{\Psi_1}\lan X,N\ran_z\\
   &=-\frac{\Psi_2}{\Psi_1}(16\rho^{-2}|P|^2)_z -\frac{\lambda}{\Psi_1}\lan X_z,N\ran -\frac{\lambda}{\Psi_1}\lan X,\n_{X_z}N\ran\\
   &=32\frac{\Psi_2}{\Psi_1}\rho^{-3}\rho_z|P|^2 - 16\frac{\Psi_2}{\Psi_1}\rho^{-2}(\bar{P}P_z + P\bar{P}_z)\\
   &\qquad-\frac{\lambda}{\Psi_1}\left\lan X,-\frac{1}{2}HX_z - \frac{2}{\rho}PX_{\bar{z}}\right\ran\\
   &=32\frac{\Psi_2}{\Psi_1}\rho^{-3}\rho_z|P|^2 - 16\frac{\Psi_2}{\Psi_1}\rho^{-2}(\bar{P}P_z + P\bar{P}_z)\\
   &\qquad+\frac{\lambda}{2\Psi_1}H\lan X,X_z\ran + \frac{2\lambda}{\rho\Psi_1}P\lan X,X_{\bar{z}}\ran.\\  
\end{aligned}
\end{equation}
Replacing (\ref{Hz}) into (\ref{Pz}) and taking the modulus, we have
\[
\begin{aligned}
\left|\frac{\partial h}{\partial\bar{z}}\right|&\leq\left[8\left|\frac{\Psi_2}{\Psi_1}\right|\rho^{-2}|\rho_z||P| + 4\left|\frac{\Psi_2}{\Psi_1}\right|\rho^{-1}(|P_z|+|\bar{P}_z|)\right.\\
&\quad\left.+\frac{|\lambda|}{2|\Psi_1|}|\lan X,X_{\bar{z}}\ran|\right]|h(z)| + \frac{\rho|\lambda|}{8|\Psi_1|}|H||\lan X,X_z\ran|.\\
\end{aligned}
\]
Since
\[
X=\frac{2}{\rho}\lan X,X_{\bar{z}}\ran X_z + \frac{2}{\rho}\lan X,X_z\ran X_{\bar{z}} + \lan X,N\ran N,
\]
$\lan X_z,X_z\ran=0=\lan X_{\bar{z}},X_{\bar{z}}\ran,\ \lan X_z,X_{\bar{z}}\ran=\rho/2,$ and 
\[
\begin{aligned}
|\lan X,X_z\ran|&=\frac{1}{2}|\lan X,X_u\ran - i \lan X,X_v\ran|\\
&=\frac{1}{2}\sqrt{\lan X,X_u\ran^2 + \lan X,X_v\ran^2}\\
&=\frac{1}{2}|\lan X,X_u\ran + i \lan X,X_v\ran|\\
&=|\lan X,X_{\bar{z}}\ran|,
\end{aligned}
\] 
we have
\begin{equation}\label{Xtop}
\sqrt{\|X\|^2 - \lan X,N\ran^2}=\frac{2}{\sqrt{\rho}}\sqrt{|\lan X,X_z\ran||\lan X,X_{\bar{z}}\ran|}=\frac{2}{\sqrt{\rho}}|\lan X,X_z\ran|.
\end{equation}
This gives
\[
\begin{aligned}
\left|\frac{\partial h}{\partial\bar{z}}\right|&\leq \left[8\left|\frac{\Psi_2}{\Psi_1}\right|\rho^{-2}|\rho_z||P| + 4\left|\frac{\Psi_2}{\Psi_1}\right|\rho^{-1}(|P_z|+|\bar{P}_z|)\right.\\
&\left.\quad +\frac{|\lambda|\sqrt{\rho}}{4|\Psi_1|}\sqrt{\|X\|^2 - \lan X,N\ran^2}\right]|h(z)|\\
&\quad + \frac{\rho^{3/2}}{16|\Psi_1|}|\lambda||H|\sqrt{\|X\|^2-\lan X,N\ran^2}.\\
\end{aligned}
\]
On the other hand, the hypothesis (\ref{hyp-soliton-0}), p. \pageref{hyp-soliton-0},
\[
K\leq \frac{1}{4}[1-\varepsilon\lambda^2(\|X\|^2-\lan X,N\ran^2)]H^2,
\]
is equivalent to
\[
|H||\lambda|\sqrt{\|X\|^2-\lan X,N\ran^2}\leq \frac{1}{\sqrt{\varepsilon}}\sqrt{H^2-4K}.
\]
This implies
\begin{equation}\label{Pz0}
\begin{aligned}
\left|\frac{\partial h}{\partial\bar{z}}\right|&\leq \left[8\left|\frac{\Psi_2}{\Psi_1}\right|\rho^{-2}|\rho_z||P| + 4\left|\frac{\Psi_2}{\Psi_1}\right|\rho^{-1}(|P_z|+|\bar{P}_z|)\right.\\
&\left.\quad+\frac{|\lambda|\sqrt{\rho}}{4|\Psi_1|}\sqrt{\|X\|^2-\lan X,N\ran^2}\right]|h(z)| + \frac{\rho^{3/2}}{16|\Psi_1|} \frac{1}{\sqrt{\varepsilon}}\sqrt{H^2-4K}\\
&\leq \left[8\left|\frac{\Psi_2}{\Psi_1}\right|\rho^{-2}|\rho_z||P| + 4\left|\frac{\Psi_2}{\Psi_1}\right|\rho^{-1}(|P_z|+|\bar{P}_z|)\right.\\
&\left.\quad+\frac{|\lambda|\sqrt{\rho}}{4|\Psi_1|}\sqrt{\|X\|^2-\lan X,N\ran^2}\right]|h(z)| + \frac{\sqrt{\rho}}{4\sqrt{\varepsilon}|\Psi_1|}|h(z)|\\
&=\left[8\left|\frac{\Psi_2}{\Psi_1}\right||\rho_z|\rho^{-2}|P| + 4\left|\frac{\Psi_2}{\Psi_1}\right|\rho^{-1}(|P_z|+|\bar{P}_z|)\right.\\
&\left.\quad+\frac{|\lambda|\sqrt{\rho}}{4|\Psi_1|}\sqrt{\|X\|^2-\lan X,N\ran^2} + \frac{\sqrt{\rho}}{4\sqrt{\varepsilon}|\Psi_1|}\right]|h(z)|,\\
\end{aligned}
\end{equation}
for every $z\in\Sigma.$ Here we have used (\ref{|P|})  which implies 
\[
|h(z)|=\frac{\rho}{4}\sqrt{H^2-4K}.
\] 
Defining $\vp_0$ as 
\[
\begin{aligned}
\vp_0&=8\left|\frac{\Psi_2}{\Psi_1}\right||\rho_z|\rho^{-2}|P| + 4\left|\frac{\Psi_2}{\Psi_1}\right|\rho^{-1}(|P_z|+|\bar{P}_z|)\\
&\quad+\frac{|\lambda|\sqrt{\rho}}{4|\Psi_1|}\sqrt{\|X\|^2-\lan X,N\ran^2} + \frac{\sqrt{\rho}}{4\sqrt{\varepsilon}|\Psi_1|}\\
\end{aligned}
\]
we have
\[
\left|\frac{\partial h}{\partial\bar{z}}\right|\leq\vp_0(z)|h(z)|.
\]

Thus, by Lemma \ref{main}, either $h(z),$ and thus $P$ is identically zero in a neighborhood $V$ of a zero $z_0,$ or this zero is isolated and the index of a direction field determined by $\textrm{Im}[P dz^2]=0$ is $-k/2$ (hence negative). If, for some coordinate neighborhood $V$ of zero, $P=0$, this holds for the whole $\Sigma;$ otherwise, the zeroes on the boundary of $V$ will contradict Lemma \ref{main}. In fact, if $V\neq\Sigma,$ then its boundary $\partial V\neq \emptyset.$ Since the set of zeros is a closed set, the points of $\partial V$ are also zeros of $P.$ Since the points of $\partial V$ are not isolated, by Lemma \ref{main}, given $z_1\in\partial V,$ there exists another neighborhood $V_1\ni z_1,$ such that $P=0$ in $V_1,$ i.e. $z_1$ is an interior point of the set of zeros of $P.$ This contradiction implies that $V=\Sigma.$ On the other hand, if $h$ (and hence $P$) is not identically zero, all zeroes are isolated and have negative indices. Since $\Sigma$ has genus zero, by the Poincar\'e index theorem, the sum of the indices of the singularities of any field of directions is $2$ (hence positive). This
contradiction shows that $P$ is identically zero. Therefore, by (\ref{|P|}), we conclude that $\Sigma$ is totally umbilical, i.e., a round sphere.

If $\Sigma$ is a sphere and $\lambda\neq 0$, we now determine its radius $R$ and its center $x_0.$ First, notice that\begin{equation}\label{H-nu-2}
H=\frac{2}{R}\ \mbox{and} \ K=\frac{1}{R^2}.
\end{equation} 
Since $\Psi(H,H^2-4K)= -\lambda\lan X,N\ran,$ we have
\begin{equation}\label{X-nu}
\lan X,N\ran = -\lambda^{-1}\Psi\left(\frac{2}{R},0\right).
\end{equation}
On the other hand, $X(\Sigma)=\ss^{2}(x_0,R)$ implies $X=x_0-RN.$ This gives $\lan X,N\ran = \lan x_0,N\ran - R,$ i.e.,
\begin{equation}\label{x0-nu}
\lan x_0,N\ran = R -\lambda^{-1}\Psi\left(\frac{2}{R},0\right),
\end{equation}
which implies that $\lan x_0,N\ran$ is constant. Since $x_0$ is a fixed vector and $N$ varies over the entire $\ss^2(x_0,R),$ we conclude that $x_0=0,$ i.e., the sphere is centered at the origin. By using (\ref{x0-nu}), we have
\[
0 = R -\lambda^{-1}\Psi\left(\frac{2}{R},0\right),
\]
i.e.,
\[
\lambda R=\Psi\left(\frac{2}{R},0\right).
\]

In the case when $\lambda=0,$ then clearly the radius of the sphere satisfies $\Psi\left(\frac{2}{R},0\right)=0.$ Since the equation $\Psi(H,H^2-4K)=0$ is invariant by isometries of $\R^3,$ the center of the sphere can be any point of $\R^3.$ 

\end{proof}

Now we present the proofs of the corollaries presented in the introduction.

\begin{proof}[Proof of Corollary \ref{Hn-alpha}]
Just notice that, differentiating (\ref{soliton-Hn}) with respect to $z,$
\[
\beta H^{\beta-1}H_z=-\lambda\lan X,N\ran_z,
\]
i.e,
\[
H_z=-\lambda \beta^{-1}H^{1-\beta}\lan X,N\ran_z.
\]
Thus, if $\beta>1$ we will need to assume that $H\neq 0,$ but when $\beta=\frac{m}{2n-1}\leq 1, m,n\in\mathbb{N},$ $2n-1$ is odd number, this assumption  is not necessary. The conclusion of the proof then follows, step by step, the proof of Theorem \ref{thm-soliton-0}.
\end{proof}

\begin{proof}[Proof of Corollary \ref{harm-H}] Since by notations  $x_1=H$, $x_2=H^2-4K\ge 0$, then $4K=H^2-x_2$. Therefore, we consider $\Psi(x_1,x_2)=\left(\frac{x_1}{4}-\frac{x_2}{4x_1}\right)^{\frac{m}{2n-1}}.$  Since 
\[
\begin{aligned}
\frac{\partial\Psi}{\partial x_1}&=\frac{m}{2n-1}\left(\frac{x_1^2-x_2}{4x_1}\right)^{\frac{m}{2n-1}-1}\left(\frac{1}{4}+\frac{x_2}{4x_1^2}\right)\\
&=\frac{m}{2n-1}\left(\frac{4x_1}{x_1^2-x_2}\right)^{1-\frac{m}{2n-1}}\left(\frac{1}{4}+\frac{x_2}{4x_1^2}\right)\\
\end{aligned}
\] 
we have, for $\frac{m}{2n-1}\leq1,$
\[
\frac{\partial\Psi}{\partial x_1}\neq 0 \iff H\neq 0.
\]
Notice that $\frac{\partial\Psi}{\partial x_1}$ has a singularity at $K=0,$ but since we need only $\left(\frac{\partial\Psi}{\partial x_1}\right)^{-1}$ in the proof of Theorem \ref{thm-soliton-0}, this singularity will be removable.
\end{proof}

\begin{proof}[Proof of Corollary \ref{K-alpha}.] Notice that $4K=x_1^2-x_2$. We can take 
\[
\Psi(x_1,x_2)=\left(\frac{x_1^2-x_2}{4}\right)^{\alpha}
\]
in Theorem \ref{thm-soliton-0} and observe that 
\[
\frac{\partial\Psi}{\partial x_1}=\frac{\alpha x_1}{2}\left(\frac{x_1^2-x_2}{4}\right)^{\alpha-1}\neq 0,
\]
since $\Sigma$ is convex.
\end{proof}

\section{Examples}\label{example}

In this section we prove that hypothesis (\ref{hyp-soliton-0}), p. \pageref{hyp-soliton-0}, of Theorem \ref{thm-soliton-0} is necessary. Since the expression in (\ref{hyp-soliton-0})
\[
K\leq \dfrac{1}{4}[1-\varepsilon\lambda^2(\|X\|^2-\lan X,N\ran^2)]H^2
\]
is equivalent to
\[
\dfrac{|H|\sqrt{\|X\|^2-\lan X,N\ran^2}}{\sqrt{H^2-4K}}\leq \dfrac{1}{|\lambda|\sqrt{\ve}},
\]
the existence of a constant $\ve>0$ such that (\ref{hyp-soliton-0}) holds is equivalent to the boundedness of
\begin{equation}\label{quot}
\frac{|H|\sqrt{\|X\|^2-\lan X,N\ran^2}}{\sqrt{H^2-4K}}.
\end{equation}
Notice that the only possibility for (\ref{quot}) to be unbounded is at the umbilical points, provided $H^2-4K=0$ in these points. 

We know that, if a rotational surface $\Sigma$ intersects the axis of rotation orthogonally, then this intersection point is umbilical. If $\Sigma$ is a rotational surface of genus zero, then there are two of these points. The aim of this section is to  show that, if there exists any non spherical rotational surface satisfying
\[
\Psi(H,H^2-4K)=-\lambda\lan X,N\ran
\]
which intersects the rotation axis orthogonally, then (\ref{quot}) is unbounded in the neighborhood of these umbilical points. Here we will assume that $\Psi$ is homogeneous of degree $\beta\in\R,$ i.e., $\Psi(a x_1,a^2 x_2)=a^\beta\Psi(x_1,x_2),$ $a>0,$ and $\frac{\partial \Psi}{\partial x_1}\neq 0.$ 

\begin{figure}[ht]
\includegraphics[scale=0.5]{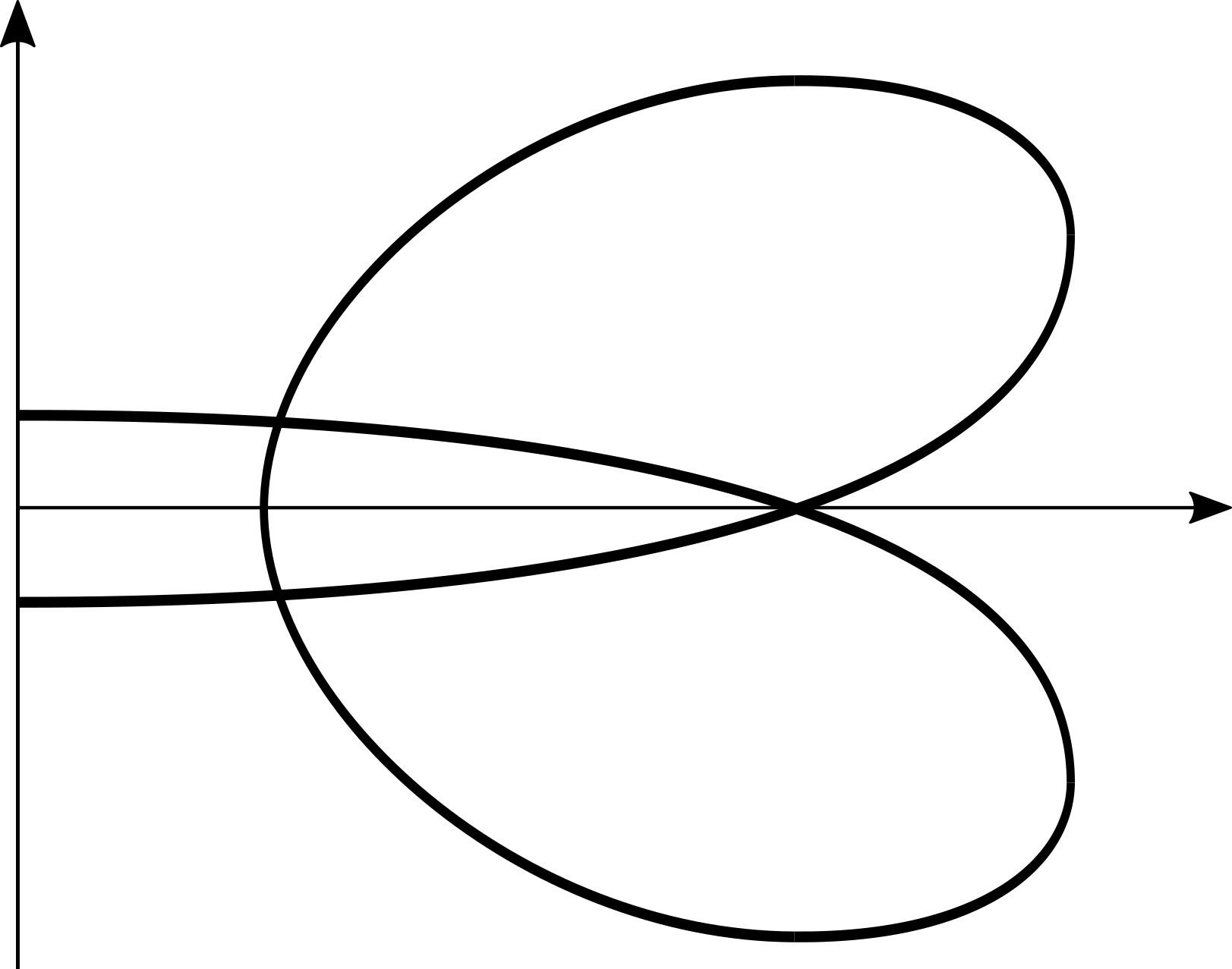}
\caption{Draft of the profile curve of the Drugan's genus zero self-shrinker of the mean curvature flow, see \cite{D}. The surface is obtained by rotating the profile curve around the vertical axis. The intersection of the profile curve with the rotation axis gives two isolated umbilical points which do not satisfy the hypothesis (\ref{eq-Hss}) of Corollary \ref{H-ss}}
\label{aaaaa}
\end{figure}

In order to prove our claim let us recall some properties of rotational surfaces in $\R^3.$ Let 
\[
X(t,\theta)=(x(t)\cos\theta,x(t)\sin\theta,y(t))
\]
be a parametrization of a rotational surface, where $t\in[0,\eta),$ $\eta>0,$ and $\theta\in\R.$ The principal curvatures of $\Sigma=X([0,\eta)\times\R)$ are given by 
\[
k_1(t)=\frac{-y'(t)}{x(t)\sqrt{(x'(t))^2+(y'(t))^2}}\ \mbox{and}\ k_2(t)=\frac{x''(t)y'(t)-x'(t)y''(t)}{((x'(t))^2+(y'(t))^2)^{3/2}}.
\]
If the profile curve is a graph $(x,\gamma(x))$, then we have
\[
k_1(x)=\frac{-\gamma'(x)}{x\sqrt{1+(\gamma'(x))^2}}\ \mbox{and} \ k_2(x)=\frac{-\gamma''(x)}{(1+(\gamma'(x))^2)^{3/2}}.
\]
From now on, we will consider that the profile curve of $\Sigma$ is locally a graph $(x,\gamma(x))$ near the umbilical point $x=0.$ Next, we will find the expressions of (\ref{quot}) in therms of $\gamma.$ First, we have
\[
H=k_1+k_2 = -\frac{1}{(1+(\gamma'(x))^2)^{3/2}}\left[\frac{\gamma'(x)}{x}(1+(\gamma'(x))^2) + \gamma''(x)\right]
\]
and
\[
H^2-4K=(k_1-k_2)^2=\frac{1}{(1+(\gamma'(x))^2)^3}\left[\frac{\gamma'(x)}{x}(1+(\gamma'(x))^2) - \gamma''(x)\right]^2.
\]
Since the inward unit normal $N$ of $\Sigma$ is given by
\[
N=\frac{1}{\sqrt{1+(\gamma'(x))^2}}(\gamma'(x)\cos\theta,\gamma'(x)\sin\theta,-1),
\]
we have
\[
\begin{aligned}
\|X\|^2-\lan X,N\ran^2& = x^2+(\gamma(x))^2 - \frac{(x\gamma'(x)-\gamma(x))^2}{1+(\gamma'(x))^2}\\
&=\frac{(x+\gamma(x)\gamma'(x))^2}{1+(\gamma'(x))^2}.\\
\end{aligned}
\]
Therefore
\begin{equation}\label{quot-gamma}
\frac{|H|\sqrt{\|X\|^2-\lan X,N\ran^2}}{\sqrt{H^2-4K}}=\frac{\left|\frac{\gamma'(x)}{x}(1+(\gamma'(x))^2) + \gamma''(x)\right||x+\gamma(x)\gamma'(x)|}{\left|\frac{\gamma'(x)}{x}(1+(\gamma'(x))^2) - \gamma''(x)\right|\sqrt{1+(\gamma'(x))^2}}.
\end{equation}

In order to conclude our analysis, we will need the Taylor expansion of $\gamma$ satisfying the initial condition $\gamma(0)=b>0$ and $\gamma'(0)=0.$ This second condition means that $\Sigma$ intersects the rotation axis (the $z$ axis) orthogonally.

In our situation, equation (\ref{soliton-0}), p. \pageref{soliton-0}, becomes
\[
\begin{aligned}
&\Psi\left(-\frac{\left[\frac{\gamma'(x)}{x}(1+(\gamma'(x))^2) + \gamma''(x)\right]}{(1+(\gamma'(x))^2)^{3/2}},\frac{\left[\frac{\gamma'(x)}{x}(1+(\gamma'(x))^2) - \gamma''(x)\right]^2}{(1+(\gamma'(x))^2)^3}\right)\\
&=-\lambda\frac{x\gamma'(x)-\gamma(x)}{\sqrt{1+(\gamma'(x))^2}}.
\end{aligned}
\]
Assuming that $\Psi$ is homogeneous of degree $\beta,$ i.e., \[\Psi(a x_1,a^2 x_2)=a^\beta\Psi(x_1,x_2),\] we have
\begin{equation}\label{soliton-rot}
\begin{aligned}
&\frac{\Psi\left(-\frac{\gamma'(x)}{x}(1+(\gamma'(x))^2) - \gamma''(x),\left[\frac{\gamma'(x)}{x}(1+(\gamma'(x))^2) - \gamma''(x)\right]^2\right)}{(1+(\gamma'(x))^2)^{3\beta/2}}\\
&=-\lambda\frac{x\gamma'(x)-\gamma(x)}{\sqrt{1+(\gamma'(x))^2}}.
\end{aligned}
\end{equation}
Notice that, since $\lim_{x\to 0^+}\frac{\gamma'(x)}{x}=\gamma''(0),$ equation (\ref{soliton-rot}) is not singular at $x=0.$
\begin{proposition}
Let $\gamma(x)$ be the solution of (\ref{soliton-rot}) with the initial conditions $\gamma(0)=b>0$ and $\gamma'(0)=0,$ where we assume that $\Psi$ is homogeneous of degree $\beta$ and $\frac{\partial \Psi}{\partial x_1}\neq 0$ in a neighborhood of $x=0.$ Then, near $x=0,$ we have
\begin{equation}\label{gamma}
\gamma(x)=b+cx^2+O(x^4),
\end{equation}
where $c=-\frac{1}{4}\left(\frac{\lambda b}{\Psi(1,0)}\right)^{1/\beta}.$
\end{proposition}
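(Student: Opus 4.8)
The plan is to regard (\ref{soliton-rot}) as a second order ODE for $\gamma$ that is regular at the axis point $x=0$, to read off its two lowest Taylor coefficients from the initial data and from the value of the equation at $x=0$, and to evaluate them by means of the homogeneity of $\Psi$. Throughout write $u=\frac{\gamma'}{x}(1+(\gamma')^2)$ and $v=\gamma''$, so that the two slots of $\Psi$ in (\ref{soliton-rot}) are $-(u+v)$ and $(u-v)^2$, and keep the notation $\Psi_1=\frac{\partial\Psi}{\partial x_1}$, $\Psi_2=\frac{\partial\Psi}{\partial x_2}$. To put the equation in normal form, note that the derivative of its left-hand side with respect to $v=\gamma''$ equals $-(1+(\gamma')^2)^{-3\beta/2}\bigl[\Psi_1+2(u-v)\Psi_2\bigr]$. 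As observed just before the statement, $\frac{\gamma'}{x}$ extends continuously to $x=0$ with value $\gamma''(0)$ because $\gamma'(0)=0$; hence $u=v=\gamma''(0)$ there, the term $2(u-v)\Psi_2$ vanishes, and the $v$-derivative reduces to $-\Psi_1(-2\gamma''(0),0)\neq 0$ by the standing hypothesis $\Psi_1\neq 0$. Thus the apparent singularity at the umbilic is removable and the implicit function theorem solves (\ref{soliton-rot}) for $\gamma''=G(x,\gamma,\gamma')$, with $G$ of the same regularity as $\Psi$ on a two-sided neighbourhood of $x=0$; in particular the solution with $\gamma(0)=b$, $\gamma'(0)=0$ is smooth and uniquely determined.

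Next I would show that $\gamma$ is even, which kills the odd Taylor coefficients. A term-by-term inspection shows that the substitution $x\mapsto -x$, $\gamma'\mapsto -\gamma'$, $\gamma''\mapsto \gamma''$ sends each of $u$, $(\gamma')^2$, $v$ and $x\gamma'-\gamma$ to its value at $-x$, so it carries (\ref{soliton-rot}) at the point $x$ into (\ref{soliton-rot}) at the point $-x$. Hence $x\mapsto \gamma(-x)$ is again a solution with the same initial data $\gamma(0)=b$, $\gamma'(0)=0$, and uniqueness forces $\gamma(-x)=\gamma(x)$. (Equivalently, evenness is forced by smoothness of the surface of revolution at the point where its profile meets the axis.) Consequently the Taylor expansion of $\gamma$ at $0$ contains only even powers; since $\gamma'(0)=0$ and the cubic coefficient vanishes, $\gamma(x)=b+cx^2+O(x^4)$ with $c=\tfrac12\gamma''(0)$.

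Finally I would compute $c$ by letting $x\to 0^+$ in (\ref{soliton-rot}). Using $\gamma(0)=b$, $\gamma'(0)=0$ and $\lim_{x\to 0^+}\frac{\gamma'(x)}{x}=\gamma''(0)$, the left-hand side tends to $\Psi(-2\gamma''(0),0)$ and the right-hand side tends to $-\lambda\cdot(-b)=\lambda b$. Provided $-2\gamma''(0)>0$ (which holds exactly when $\lambda b/\Psi(1,0)>0$, the relevant case since $\lambda,b>0$ and $\Psi(1,0)>0$), the homogeneity relation $\Psi(ax_1,a^2x_2)=a^\beta\Psi(x_1,x_2)$ with $a=-2\gamma''(0)$ and $(x_1,x_2)=(1,0)$ gives $\Psi(-2\gamma''(0),0)=(-2\gamma''(0))^\beta\Psi(1,0)$. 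Equating to $\lambda b$ yields $-2\gamma''(0)=\bigl(\lambda b/\Psi(1,0)\bigr)^{1/\beta}$, and since $c=\tfrac12\gamma''(0)$ we obtain $c=-\tfrac14\bigl(\lambda b/\Psi(1,0)\bigr)^{1/\beta}$, as claimed.

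The step I expect to be the main obstacle is the regularity at the umbilic point in the first paragraph: one must verify simultaneously that the $1/x$ singularity inherited from $k_1$ is removable and that the coefficient multiplying $\gamma''$ does not degenerate there. It is precisely the coincidence $u-v\to 0$ as $x\to 0$, which annihilates the $\Psi_2$-contribution, together with the hypothesis $\Psi_1\neq 0$, that makes the equation solvable for $\gamma''$ and the solution smooth and even. A minor additional point is the admissibility of the $\beta$-th root, which requires the sign condition $\lambda b/\Psi(1,0)>0$ guaranteeing $-2\gamma''(0)>0$.
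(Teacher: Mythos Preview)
Your approach differs from the paper's. The paper simply inserts a generic expansion $\gamma(x)=b+cx^2+a_3x^3+a_4x^4+O(x^5)$ into (\ref{soliton-rot}), expands both sides, differentiates once in $x$, and reads off $-9a_3\,\Psi_1=0$ at $x=0$, whence $a_3=0$; the value of $c$ is obtained exactly as you do, by letting $x\to0^+$ and using homogeneity. You instead argue once and for all that $\gamma$ is even, which is more conceptual and in fact gives more (all odd coefficients vanish, not just $a_3$).

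There is, however, a gap in your regularity step. The left-hand side of (\ref{soliton-rot}), viewed as a function $F(x,\gamma,p,v)$ with $p=\gamma'$ and $v=\gamma''$, involves $p/x$ and is therefore \emph{not} defined, let alone $C^1$, on a full neighbourhood of $(0,b,0,2c)$: it blows up whenever $x=0$ and $p\neq0$. So the implicit function theorem cannot deliver a smooth $G(x,\gamma,\gamma')$ on a two-sided neighbourhood of $x=0$, and Picard uniqueness for $\gamma''=G(x,\gamma,\gamma')$ is not available at the singular point. Your parity--uniqueness argument, as written, is thus incomplete precisely at the step you yourself flag as the main obstacle. Your parenthetical alternative does close the gap: in the paper's setting $\Sigma$ is a smooth immersed surface, and smoothness of a surface of revolution at a point on the axis forces the profile $\gamma$ to be a smooth even function of $x$ (equivalently, a smooth function of $x^2$). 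If you want to stay at the ODE level, you would need to desingularize---for instance by introducing $w=\gamma'/x$ and analysing the resulting system---which is more work than the paper's direct coefficient matching.
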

\begin{proof}
First, taking $x\to0^+$ in (\ref{soliton-rot}), we obtain
\[
\Psi(-2\gamma''(0),0)=-\lambda b\ \implies \ (-2\gamma''(0))^\beta\Psi(1,0)=\lambda b,
\]
i.e.,
\begin{equation}\label{c}
\gamma''(0)=-\frac{1}{2}\left(\frac{\lambda b}{\Psi(1,0)}\right)^{1/\beta}:=2c.
\end{equation}
Let
\[
\gamma(x)=b+cx^2 + a_3x^3 + a_4x^4 + O(x^5)
\]
be the Taylor expansion of $\gamma(x)$ near $x=0.$ Therefore, we have
\[
\gamma'(x)=2cx + 3a_3x^2 + 4a_4x^3 + O(x^4)
\]
and 
\[
\gamma''(x)=2c + 6a_3x + 12a_4x^2 + O(x^3).
\]
This implies
\begin{equation}\label{g-1}
\begin{aligned}
-\frac{\gamma'(x)}{x}(1+(\gamma'(x))^2) - \gamma''(x)&=-(2c+3a_3x+4a_4x^2+O(x^3))\times\\
&\quad\times(1+4c^2x^2+O(x^3))\\
&\quad - (2c+6a_3x+12a_4x^2+O(x^3))\\
&=-4c-9a_3x -8(c^3+2a_4)x^2 + O(x^3),\\
\end{aligned}
\end{equation}
i.e.,
\begin{equation}\label{g-2}
\begin{aligned}
\frac{\gamma'(x)}{x}(1+(\gamma'(x))^2) - \gamma''(x)&= -3a_3x+8(c^3-a_4)x^2 + O(x^3),\\
\end{aligned}
\end{equation}
and
\begin{equation}\label{g-3}
\begin{aligned}
x\gamma'(x)-\gamma(x)&=x(2cx+3a_3x^2+O(x^3)) -(b+cx^2+O(x^3))\\
&=-b + cx^2 + O(x^3).
\end{aligned}
\end{equation}
Replacing (\ref{g-2}) and (\ref{g-3}) into (\ref{soliton-rot}), gives
\begin{equation}\label{sol-gamma-1}
\begin{aligned}
\Psi&\left(-4c-9a_3x -8(c^3+2a_4)x^2 + O(x^3),[-3a_3x+8(c^3-a_4)x^2 + O(x^3)]^2\right)\\
&=-\lambda(1+4c^2x^2+O(x^3))^{\frac{3\beta-1}{2}}(-b + cx^2 + O(x^3))\\
\end{aligned}
\end{equation}
Taking the derivative in (\ref{sol-gamma-1}),
\[
\begin{aligned}
&-\frac{\partial\Psi}{\partial x_1}\cdot(9a_3+16(c^3+2a_4)x+O(x^2))+\frac{\partial\Psi}{\partial x_2}\cdot(18a_3^2x +O(x^2))\\
&=-\lambda\left(\frac{3\beta-1}{2}\right)(1+4c^2x^2+O(x^3))^{\frac{3}{2}(\beta-1)}\times\\
&\quad\times(8c^2x+O(x^2))(-b + cx^2 + O(x^3))\\
&\quad -\lambda (1+4c^2x^2+O(x^3))^{\frac{3\beta-1}{2}}(2cx+O(x^2)).\\
\end{aligned}
\]
implies that, at $x=0,$
\[
-9a_3\frac{\partial\Psi}{\partial x_1}=0 \implies a_3=0,
\]
since $\frac{\partial\Psi}{\partial x_1}\neq 0$ at $x=0$ by hypothesis. Thus
\begin{equation}\label{gamma-2}
\gamma(x)=b+cx^2 + a_4x^4 + O(x^5),
\end{equation}
where $c$ is given by (\ref{c}).
\end{proof}
The main result of the this section is the following
\begin{proposition}
Let $\Sigma$ be a (piece of a) rotational surface of $\R^3$ satisfying 
\[
\Psi(H,H^2-4K)=-\lambda\lan X,N\ran,\ \lambda\in\R\backslash\{0\},
\]
where $\Psi$ satisfies $\Psi(a x_1,a^2 x_2)=a^\beta\Psi(x_1,x_2),\ a>0,\ \beta\in\R,$ and $\frac{\partial\Psi}{\partial x_1}\neq 0.$ If $\Sigma$ intersects the axis of rotation orthogonally and is not a (piece of a) sphere centered at the origin, then
\[
\frac{|H|\sqrt{\|X\|^2-\lan X,N\ran^2}}{\sqrt{H^2-4K}}
\]
is unbounded in a neighborhood of the umbilical point which intersects the axis of rotation.
\end{proposition}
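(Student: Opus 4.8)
The plan is to feed the Taylor expansion obtained in the previous proposition, namely $\gamma(x)=b+cx^2+a_4x^4+O(x^5)$ with $a_3=0$ and $c=-\frac14\left(\frac{\lambda b}{\Psi(1,0)}\right)^{1/\beta}$, directly into the explicit formula (\ref{quot-gamma}) for the quotient, and then to compare the order of vanishing at the umbilical point $x=0$ of the numerator and of the denominator. Since $x=0$ lies on the axis it is umbilical, so the factor $\frac{\gamma'}{x}(1+(\gamma')^2)-\gamma''$ in the denominator (which is essentially $k_1-k_2$) vanishes there; the whole point is to show the numerator does not vanish fast enough to compensate.

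Concretely, using $\gamma'(x)=2cx+O(x^3)$ and $\gamma''(x)=2c+O(x^2)$ one expands each factor. One finds $\frac{\gamma'}{x}(1+(\gamma')^2)+\gamma''=4c+O(x^2)\to 4c\neq 0$ (note $c\neq 0$ because $\lambda\neq 0$ and $b>0$), while $x+\gamma\gamma'=(1+2bc)x+O(x^3)$ and, exactly as in (\ref{g-2}) with $a_3=0$, $\frac{\gamma'}{x}(1+(\gamma')^2)-\gamma''=8(c^3-a_4)x^2+O(x^4)$; finally $\sqrt{1+(\gamma')^2}\to 1$. Hence the numerator of (\ref{quot-gamma}) vanishes only to first order, with leading coefficient of modulus $|4c||1+2bc|$, whereas the denominator vanishes to order at least two, so that
\[
\frac{|H|\sqrt{\|X\|^2-\lan X,N\ran^2}}{\sqrt{H^2-4K}}\;\sim\;\frac{|4c|\,|1+2bc|}{|8(c^3-a_4)|}\cdot\frac{1}{|x|}\;\longrightarrow\;+\infty\qquad(x\to 0^+),
\]
provided $1+2bc\neq 0$. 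If instead $c^3-a_4=0$, the denominator vanishes to even higher order and the blow-up is only faster, so the sole remaining case to eliminate is $1+2bc=0$.

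The heart of the matter is therefore to show that $1+2bc=0$ characterizes precisely the sphere centered at the origin. Substituting the value of $c$, the equation $1+2bc=0$ is equivalent to $b^{\beta+1}=2^\beta\Psi(1,0)/\lambda$, which is exactly the radius relation $\lambda R=\Psi(\frac2R,0)$ of Theorem \ref{thm-soliton-0} with $b=R$. Because $\frac{\partial\Psi}{\partial x_1}\neq 0$, equation (\ref{soliton-rot}) can be solved for $\gamma''$ near $x=0$, so the profile is the \emph{unique} solution of a second order ODE with initial data $\gamma(0)=b$, $\gamma'(0)=0$; since the round sphere of radius $R$ centered at the origin is such a solution for $b=R$, uniqueness forces $\gamma$ to coincide with that sphere whenever $1+2bc=0$. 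Contrapositively, if $\Sigma$ is not a piece of the sphere centered at the origin, then $1+2bc\neq 0$ and the displayed blow-up applies. The degenerate possibility that the denominator factor vanishes identically (so $\Sigma$ is totally umbilical near $x=0$) is ruled out because a round piece satisfying the soliton equation has $\lan X,N\ran$ constant and must, exactly as in the proof of Theorem \ref{thm-soliton-0}, be centered at the origin.

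The step I expect to be the genuine obstacle is this characterization $1+2bc=0\iff\Sigma$ is the sphere centered at the origin: the leading-order computation itself is routine, but deducing that one scalar condition forces the entire profile to agree with the round sphere relies on the ODE-uniqueness reduction above, which in turn needs $\frac{\partial\Psi}{\partial x_1}\neq 0$ to make (\ref{soliton-rot}) a nondegenerate second order equation. Some additional care is required in the borderline exponent $\beta=-1$, where $b^{\beta+1}=2^\beta\Psi(1,0)/\lambda$ no longer determines $R$ uniquely and the statement must be interpreted accordingly.
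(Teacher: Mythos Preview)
Your proposal is correct and follows essentially the same route as the paper: plug the Taylor expansion $\gamma(x)=b+cx^2+a_4x^4+O(x^5)$ into (\ref{quot-gamma}), observe the quotient behaves like $\frac{|4c||1+2bc|}{|8(c^3-a_4)|}\cdot\frac{1}{x}$ near $x=0$, and then show via ODE uniqueness that $1+2bc=0$ forces $\Sigma$ to be the sphere of radius $b$ centered at the origin. You are in fact slightly more careful than the paper in explicitly disposing of the degenerate cases $c^3-a_4=0$ (the paper's formula $\tilde F(0)=\frac{c(1+2bc)}{2(c^3-a_4)}$ tacitly assumes this does not vanish) and $\beta=-1$; one minor slip is that the remainder in $\frac{\gamma'}{x}(1+(\gamma')^2)-\gamma''$ is $O(x^3)$ rather than $O(x^4)$, but this does not affect the argument.
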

\begin{proof}
By replacing (\ref{gamma}) in (\ref{quot-gamma}) (in order to simplify the calculations we can use (\ref{g-1}) and (\ref{g-2}) for $a_3=0$), we have
\[
\begin{aligned}
&\frac{|H|\sqrt{\|X\|^2-\lan X,N\ran^2}}{\sqrt{H^2-4K}}\\
&=\frac{|4c+8(c^3+2a_4)x^2+O(x^3)||x+(b+cx^2+O(x^4))(2cx+O(x^3))|}{|8(c^3-a_4)x^2+O(x^3)|\sqrt{1+4c^2x^2+O(x^3)}}\\
&=\frac{|4c+8(c^3+2a_4)x^2+O(x^3)||1+2bc + O(x^2)|}{x|8(c^3-a_4)+O(x)|\sqrt{1+4c^2x^2+O(x^3)}}\\
&:=\frac{\tilde{F}(x)}{x},
\end{aligned}
\]
where $\tilde{F}(0)=\frac{c(1+2bc)}{2(c^3-a_4)}\neq 0$ if and only if $1+2bc\neq 0.$ But
\[
1+2bc=0 \iff 1=\frac{b}{2}\left(\frac{\lambda b}{\Psi(1,0)}\right)^{1/\beta},
\]
i.e.,
\[
b^{1+\frac{1}{\beta}}=\frac{2\Psi(1,0)^{\frac{1}{\beta}}}{\lambda^\frac{1}{\beta}}.
\]
On the other hand, the sphere centered at the origin and radius $b$ satisfies
\[
\begin{aligned}
\Psi\left(\frac{2}{b},0\right)=\lambda b &\iff \left(\frac{2}{b}\right)^\beta\Psi(1,0)=\lambda b \\
&\iff b^{1+\beta}=\frac{2^\beta}{\lambda}\Psi(1,0)\\
&\iff b^{1+\frac{1}{\beta}}=\frac{2\Psi(1,0)^{\frac{1}{\beta}}}{\lambda^\frac{1}{\beta}}.
\end{aligned}
\]
Thus, $\Sigma$ and $\ss^2(b)$ has a commom point at $x=0$. Since the profile curve of both surfaces has a horizontal tangent at $x=0,$ by the uniqueness theorem for ordinary differential equations, we conclude that $\Sigma=\ss^2(b)$ in a neighborhood of $x=0.$  Therefore, if $\Sigma$ is not a sphere, then $\tilde{F}(0)\neq 0.$ Thus, near zero,
\[
\frac{|H|\sqrt{\|X\|^2-\lan X,N\ran^2}}{\sqrt{H^2-4K}}=\frac{\tilde{F}(x)}{x}
\]
is unbounded.
\end{proof}

\begin{bibdiv}
\begin{biblist}
\bibselect{references}
\end{biblist}
\end{bibdiv}

\end{document}